\newcommand{\cL}{\mathcal{L}}
\newcommand{\cM}{\mathcal{M}}
\newcommand{\cG}{\mathcal{G}}
\newcommand{\cX}{\mathcal{X}}
\newcommand{\cY}{\mathcal{Y}}
\newcommand\R{\mathbb{R}}
\newcommand\C{\mathbb{C}}
\newcommand\term[1]{\emph{#1}}
\newcommand{\Lframe}[2]{\big( #1, #2 \big) }
\newcommand{\form}{\mathfrak q}
\DeclareMathOperator{\Mas}{Mas}
\newcommand{\Qform}{\Omega}
\newcommand{\Qmat}{W}
\DeclareMathOperator\dom{dom}
\DeclareMathOperator\mul{mul}
\DeclareMathOperator\Ran{Ran}
\DeclareMathOperator\Sig{Sig}
\DeclareMathOperator\iD{\iota}
\newcommand\cbd{\delta} 
\newcommand{\Ver}{{\cG_\infty}}
\newcommand{\Hor}{{\cG_0}}
\newtheorem{theorem}{Theorem}[section]	
\newtheorem{corollary}[theorem]{Corollary}
\newtheorem{lemma}[theorem]{Lemma}
\newtheorem{proposition}[theorem]{Proposition}
\theoremstyle{definition}
\theoremstyle{remark}
\newtheorem{remark}[theorem]{Remark}
\newtheorem{example}[theorem]{Example}
\numberwithin{equation}{section}
\begin{document}

\title[Duistermaat's triple index]{The calculus of Duistermaat's triple index}

\author[G.\ Berkolaiko]{Gregory Berkolaiko}
\address{Department of Mathematics,
	Texas A\&M University, College Station,
	TX 77843, USA}
\email{gberkolaiko@tamu.edu}

\author[G.\ Cox]{Graham Cox}
\address{Department of Mathematics and Statistics, Memorial University of Newfoundland, St. John's, NL A1C 5S7, Canada}
\email{gcox@mun.ca}

\author[Y.\ Latushkin]{Yuri Latushkin}
\address{Department of Mathematics,
	The University of Missouri, Columbia, MO 65211, USA}
\email{latushkiny@missouri.edu}

\author[S.\ Sukhtaiev]{Selim Sukhtaiev}
\address{Department of Mathematics and Statistics,
	Auburn University, Auburn, AL 36849, USA}
\email{szs0266@auburn.edu}

\begin{abstract}
  In this paper we develop a systematic calculus for the Duistermaat
  index, a symplectic invariant defined for triples of Lagrangian
  subspaces. Introduced nearly half a century ago, this index has
  lately been the subject of renewed attention, due to its central
  role in eigenvalue interlacing problems on quantum graphs (and more
  abstractly for self-adjoint extensions of symmetric operators). Here
  we give an axiomatic characterization of the index that leads to
  elementary proofs of its fundamental properties. We also relate the
  index to other quantities often appearing in symplectic geometry,
  such as the H\"ormander--Kashiwara--Wall index and the Maslov
  index. Among other things, this leads to a curious formula for the
  Morse index of a difference of Hermitian matrices.
\end{abstract}

\maketitle


\section{Introduction}

The relative positions of eigenvalues of two self-adjoint operators that differ by a finite-rank perturbation in their boundary conditions is a question arising in many applications.  These include the TE/TM mode
analysis in graph-like photonic structures \cite{Kuc_incol01},
stability analysis of brake orbits in Lagrangian systems
\cite{HuWuYan_jdde20,HuPorWuXin_n26}, analysis of nodal and Robin
domains \cite{Ber_cmp08,AloBanBer_cmp17,BanProSof_prep25} and spectral
minimal partitions \cite{BanBerRazSmi_cmp12,HofKen_lmp21} on quantum
graphs.
From an abstract perspective, tracking eigenvalue changes due to
variations in boundary conditions is equivalent to comparing
self-adjoint extensions of a symmetric operator with finite and equal
defect numbers. Such spectral comparison problems for differential
operators may be approached via topological indices \cite{Arn_fap85,Bot_cpam56,CZ84,Dui_am76}.  In particular, the Maslov
index \cite{Arn_faa67,Ke58,CapLeeMil_cpam94,
  deGosson_PrinciplesNewtonianQuantum,Leray_LagrangianAnalysis,Piccione,RobSal_t93}
has played a major role in these investigations, and is an active area
of research; see \cite{Beck} for a review and
\cite{AgrBarBes_n23,BanProSof_prep25,BarOffPorWu_mz21,
  Beck_2018,BJ22,BerCoxLatSuk_prep23, 
  CDB09,CDB11,GarPorWu_prep25,How_jmaa21, HLS17, HLS18,HS16,
  Howard_22, LS20} for a partial list of papers specifically focused on the finite
defect setting.
The main theme in these papers is to equate the Maslov index to the
eigenvalue counting function of some linear differential operator,
which is usually obtained by linearizing a dynamical system at an
equilibrium solution. However, the Maslov index is difficult to
compute in practice.  It depends on a path of Lagrangian subspaces, and so without complete knowledge of this path (which amounts to having an explicit solution for the linearized system) it cannot be computed effectively.

 

The fundamental observation of
\cite{HuWuYan_jdde20,BerCoxLatSuk_prep23,GarPorWu_prep25} is that the
\emph{difference} of two eigenvalue counting functions can be evaluated
in terms of a much simpler index that only depends on the endpoints of
the path.
%
%
%
The so-called \emph{Duistermaat index}, introduced in \cite{Dui_am76},
takes three Lagrangian subspaces as its input. In the metric graph
setting these correspond to the two sets of vertex conditions under
consideration and also the Dirichlet conditions at the affected
vertices.  Duistermaat's definition
is somewhat indirect, however, as it involves an arbitrary choice of a fourth
Lagrangian plane that is transversal to the three planes of
interest. The proof that the resulting index is independent of this
choice relies on the Maslov index for paths of Lagrangian subspaces,
see \cite[Eq.~(2.16)]{Dui_am76}. 
As noted above, the Maslov index is inherently more complicated than an index depending on
\emph{fixed} Lagrangian subspaces.

The principal aim of this note is to define the Duistermaat index directly, 
without recourse to the Maslov index.  We do
this by giving a set of axioms (inspired by the approach of
\cite{CapLeeMil_cpam94}) as well as a
formula requiring nothing more than linear algebra.  It is the set of
axioms that turns out to be most useful in practice, allowing us to
give short and simple proofs for many known and some unknown
properties of the Duistermaat index.

We consider $\C^n \oplus \C^n$ as a Hermitian symplectic space with
the canonical symplectic form
\begin{equation}
  \label{eq:sympl_form}
  \omega\big((x_1,x_2), (y_1,y_2) \big) := \left<x_1, y_2\right>_{\C^n} - \left<x_2, y_1\right>_{\C^n}.
\end{equation}
A subspace $\cL \subset \C^n \oplus \C^n$ is
\emph{Lagrangian} if it coincides with its
symplectic complement, 
\begin{equation}
  \label{eq:sympl_complement}
  \cL^\omega := \left\{v \in \C^n \oplus \C^n \colon \omega(u, v) = 0
    \text{ for all } u \in \cL \right\}.
\end{equation}
The set of Lagrangian subspaces (often
called \emph{Lagrangian planes}) in 
$(\C^n \oplus \C^n , \omega)$ is denoted by $\Lambda(n)$.  Every
Lagrangian subspace has dimension $n$, half of the dimension of the
symplectic space, therefore
$\cL_1$ and $\cL_2$ are \emph{transversal} whenever
$\cL_1 \cap \cL_2 = \{0\}$.

An important example of a Lagrangian plane is the graph of
a Hermitian matrix $M$,
\begin{equation}
  \label{eq:graph_def}
  \cG_M := \big\{
    (x, Mx)
    : x\in \C^n \big\}.
\end{equation}
In particular, the ``horizontal'' plane,
\begin{equation}
  \label{eq:hor_def}
  \Hor := \C^n\oplus0,  
\end{equation}
will play an important role in this note.  The
``vertical'' plane $0\oplus\C^n$ is 
not the graph of a Hermitian matrix,
even though we will abuse the notation in \eqref{eq:graph_def} to denote
it
\begin{equation}
  \label{eq:ver_def}
  \Ver := 0\oplus\C^n.  
\end{equation}
Being ``somewhat vertical'' is the only obstacle for a
Lagrangian plane to be a graph: a Lagrangian plane $\cL$ is
the graph of a Hermitian matrix if and only if $\cL$ is transversal to
$\Ver$.

The \emph{Duistermaat index}
is an integer
symplectic invariant of a triple of Lagrangian planes, describing their
relative position.  
Intuitively, it can be thought of as the maximal
dimension of a subspace of $\cL_3$ that lies between $\cL_1$ and
$\cL_2$.  The formal definition we propose differs from that of
\cite{Dui_am76} by taking an axiomatic approach.

\begin{theorem}
  \label{def:Dui_index}
  There exists a unique function
  $\iD : \Lambda(n)^3 \to \{0,1,\ldots,n\}$, called the
  \emph{Duistermaat index}, that satisfies the following properties.
  \begin{enumerate}
  \item \label{item:norm} (Normalization) For any Hermitian matrix
    $A$,
    \begin{equation}
      \label{eq:normalization}
      \iD(\Hor, \cG_A, \Ver) = n_-(A),
    \end{equation}
    where $n_-(A)$ denotes the number of negative eigenvalues (also
    known as the \emph{Morse index}) of the matrix $A$.
  \item \label{item:sympl_inv}
    (Symplectic Invariance) For any symplectic transformation $\mathsf{S}$ on
    $\C^n \oplus \C^n$, 
    \begin{equation}
      \label{eq:sympl_inv}
      \iD(\mathsf{S}\cL_1, \mathsf{S}\cL_2, \mathsf{S}\cL_3) = \iD(\cL_1, \cL_2, \cL_3).
    \end{equation}
  \item \label{item:cocycle_prop}
    (Cocycle Property) For any Lagrangian planes $\cL_1$, $\cL_2$,
    $\cL_3$ and $\cL_4$,
    \begin{equation}
      \label{eq:cocyle_identity}
      \iD(\cL_1,\cL_2,\cL_3) - \iD(\cL_1,\cL_2,\cL_4)
      + \iD(\cL_1,\cL_3,\cL_4) - \iD(\cL_2,\cL_3,\cL_4) = 0.
    \end{equation}
  \end{enumerate}
\end{theorem}

\begin{remark}
  \label{rem:comparison_def}
  The Duistermaat index as defined in \cite{Dui_am76} and \cite{ZhoWuZhu_fmc18} satisfies the axioms in
  Theorem~\ref{def:Dui_index} and therefore coincides with our version.
\end{remark}

\begin{remark}
  \label{rem:cocycle}
  To motivate the terminology in \eqref{item:cocycle_prop}, we recall the coboundary operator $\cbd$ (see \cite[Chapter
  3]{Hatcher}) which maps functions of $k-1$ arguments to functions of
  $k$ arguments by
  \begin{equation}
    \label{eq:coboundary_def}
    \big(\cbd \phi\big) (\ell_1, \ldots, \ell_k)
    = \sum_{j=1}^k (-1)^{k-j} \phi(\ell_1,\ldots,\widehat{\ell}_j, \ldots, \ell_k),
  \end{equation}
  where a hat indicates the argument has been omitted.  Then 
  \eqref{eq:cocyle_identity} says that the coboundary of $\iD$ is
  zero, i.e., $\iD$ is a cocycle.
\hfill$\Diamond$\end{remark}


Theorem~\ref{def:Dui_index} will be established in
Section~\ref{sec:existence} by supplying an explicit and self-contained, if somewhat
cumbersome, formula for the index.  A more practical formula is the
following extension of \eqref{eq:normalization}.

\begin{theorem}
  \label{thm:general_formula}
  For any $n \times n$ Hermitian matrices $A$, $B$ and $C$,
  \begin{align}
    \label{eq:Grahams_graphs}
    \iD(\cG_A, \cG_B, \cG_C)
    &= n_-(B-A) - n_-(C-A) + n_-(C-B)
    \\ \nonumber
    &= \left(\cbd \widetilde{n}_-\right)(A, B, C),
  \end{align}
  where $\widetilde{n}_-(X,Y) := n_-(Y-X)$.
\end{theorem}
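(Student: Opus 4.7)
The plan is to use the cocycle property of \Cref{def:Dui_index} to reduce the three-graph quantity $\iD(\cG_A, \cG_B, \cG_C)$ to a sum of indices each of which involves the vertical plane $\Ver$, since these latter quantities can be put directly into the normalization form \eqref{eq:normalization}. The natural choice of fourth plane in \eqref{eq:cocyle_identity} is $\cL_4 = \Ver$, which is transversal to every graph $\cG_M$. Applying \eqref{eq:cocyle_identity} to the quadruple $(\cG_A, \cG_B, \cG_C, \Ver)$ gives
\begin{equation*}
  \iD(\cG_A, \cG_B, \cG_C)
  = \iD(\cG_A, \cG_B, \Ver) - \iD(\cG_A, \cG_C, \Ver) + \iD(\cG_B, \cG_C, \Ver).
\end{equation*}

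Next I would reduce each term $\iD(\cG_X, \cG_Y, \Ver)$ to a normalization. The right symplectic map to use is the shear
\begin{equation*}
  \mathsf{S}_X(u, v) := (u, v - Xu),
\end{equation*}
which sends $\cG_X \mapsto \Hor$, $\cG_Y \mapsto \cG_{Y - X}$, and fixes $\Ver$. A short computation using $X = X^{*}$ shows that $\mathsf{S}_X$ preserves $\omega$, and this is really the only place where the Hermitian hypothesis enters. Combining the symplectic invariance axiom with the normalization axiom then yields
\begin{equation*}
  \iD(\cG_X, \cG_Y, \Ver) = \iD(\Hor, \cG_{Y - X}, \Ver) = n_-(Y - X).
\end{equation*}
Substituting this for each of the three terms in the cocycle identity yields the first equality of \eqref{eq:Grahams_graphs}.

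For the second equality, I would simply unpack the definition \eqref{eq:coboundary_def} of the coboundary $\cbd$ for $k=3$, which gives $(\cbd \widetilde{n}_-)(A,B,C) = \widetilde{n}_-(B,C) - \widetilde{n}_-(A,C) + \widetilde{n}_-(A,B) = n_-(C-B) - n_-(C-A) + n_-(B-A)$, matching the first equality. I do not anticipate any substantive obstacle: the only non-trivial verification is that $\mathsf{S}_X$ is symplectic, and that is a one-line check. The essential content of the argument is recognizing that the cocycle property with $\cL_4 = \Ver$ is exactly the right move, after which the axiomatic framework does the work.
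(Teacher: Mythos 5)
Your proof is correct and follows essentially the same route as the paper: the cocycle identity with $\cL_4 = \Ver$, followed by a shear (the paper uses $(u,v)\mapsto(u,Au+v)$, the inverse of your $\mathsf{S}_X$, sending $\Hor\mapsto\cG_A$ and $\cG_{B-A}\mapsto\cG_B$ while fixing $\Ver$) together with symplectic invariance and the normalization axiom to evaluate each term $\iD(\cG_X,\cG_Y,\Ver)=n_-(Y-X)$.
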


One application of \eqref{eq:Grahams_graphs} is the following curious
identity: for any invertible $n\times n$ Hermitian
matrices $A$ and $B$,
\begin{equation}
  \label{eq:crazy_formula}
  n_-\big(A-B\big) - n_-\big(B^{-1}-A^{-1}\big) = n_-(A) - n_-(B).
\end{equation}
Non-invertible $A$ or $B$ are discussed in
Section~\ref{sec:applications}.  We also point out that
\eqref{eq:Grahams_graphs} can be used for arbitrary $\cL_1$, $\cL_2$
and $\cL_3$ using symplectic invariance \eqref{eq:sympl_inv} and a
symplectic transformation $\mathsf{S} $ which makes all
$\mathsf{S}\cL_j$ transversal to $\cG_\infty$, for example
\begin{equation}
  \label{eq:Sepsilon}
  \mathsf{S} =
  \begin{pmatrix}
    I & \epsilon I \\
    0 & I
  \end{pmatrix}
\end{equation}
with generic $\epsilon$, see the discussion following \eqref{eq:Edef}.


\begin{example}
  \label{ex:three_graces}
  In the symplectic space $\C \oplus \C$, consider three Lagrangian
  plane, $\cG_A := \{ (z, Az) \colon z\in \C\}$ and the similarly
  defined $\cG_B$ and $\cG_C$, with the ``slopes'' $A,B,C \in \R$.
  Theorem~\ref{thm:general_formula} immediately yields
  \begin{equation}
    \label{eq:basic_iD}
    \iD(\cG_A,\cG_B,\cG_C)
    =
    \begin{cases}
      0,\quad & A \leq B \leq C
      \ \text{ or }\ 
      C < A \leq B
      \ \text{ or }\ 
      B \leq C < A,\\
      1,\quad & A \leq C < B
      \ \text{ or }\
      B < A \leq C
      \ \text{ or }\
      C < B < A.
    \end{cases}
  \end{equation}
  These results (with distinct $A$, $B$ and $C$) are illustrated in
  Figure~\ref{fig:iD_examples}.  The index $\iD$ can be evaluated by
  rotating the line $\cG_A$ counterclockwise to $\cG_B$ and counting
  the number of times (0 or 1 in this low-dimensional case) that it
  intersects $\cG_C$.  The borderline cases are treated according
  to \eqref{eq:basic_iD}: $\cG_A = \cG_C$ counts as an intersection
  but $\cG_B = \cG_C$ does not; if $\cG_A=\cG_B$, the index is 0 for
  any $\cG_C$.  This ``dynamic'' perspective readily generalizes to
  the case when one or more of the Lagrangian planes is vertical, as
  well as to Lagrangian planes in $\C^n\oplus\C^n$ where the
  intersections are counted with their dimension.  \hfill$\Diamond$
\end{example}

\begin{figure}
  \centering
  \includegraphics[scale=0.8]{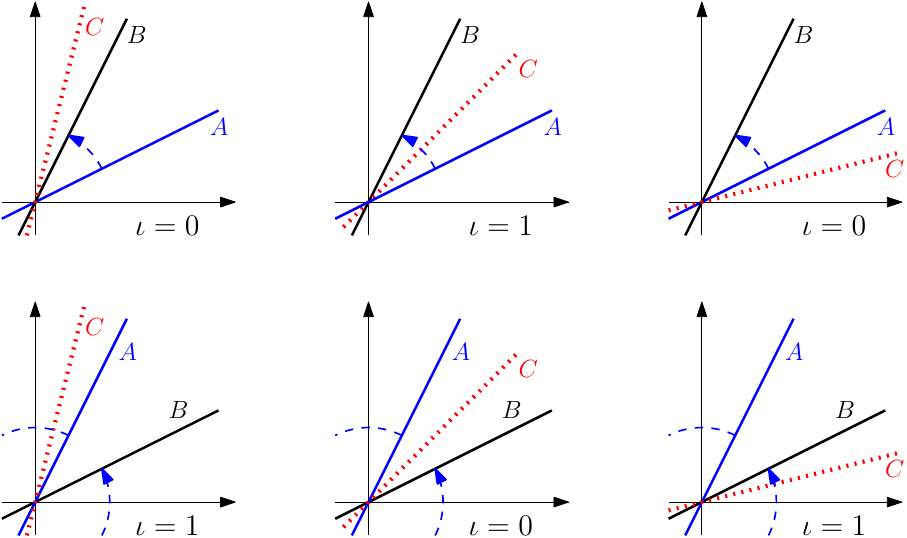}
  \caption{Possible transversal configurations of three Lagrangian
    planes (illustrated in $\R^2$) and the corresponding values of the
    Duistermaat index, see Example~\ref{ex:three_graces}.  The directed
    dashed arcs indicate the non-decreasing paths from $A$ to $B$,
    realizing the minimum in \eqref{eq:MasDuist}.}
  \label{fig:iD_examples}
\end{figure}

The intuition gained in Example~\ref{ex:three_graces} allows us to
interpret the Duistermaat index as the minimal value of the Maslov
index over non-decreasing paths.  This interpretation highlights the
usefulness of the Duistermaat index, since computation of the Maslov
index is far more difficult than applying an algebraic identity such
as \eqref{eq:Grahams_graphs}.
\begin{theorem}
  \label{thm:extremal_Dui}
  For arbitrary Lagrangian planes $\cL_0$, $\cL_1$ and $\cM$ one has
  \begin{equation}
    \label{eq:MasDuist}
    \iD(\cL_0,\cL_1,\cM)
    = \min \left\{\Mas_{[0,1]}\!\big(\cL(t), \cM\big)\Big|\
      \begin{matrix}
	\cL(0)=\cL_0, \, \cL(1)=\cL_1, \\
	t\mapsto\cL(t) \text{\ is  non-decreasing} 
      \end{matrix} \right\},
  \end{equation}
  with the minimum attained by some (non-unique) path $\cL(t)$.
  Furthermore, any path $\cL(t)$ that attains the minimum for one
  $\cM$ will attain the minimum for all $\cM$.
\end{theorem}

Here, $\Mas_{[0,1]}\!\big(\cL(t), \cM\big)$ is the \emph{Maslov index}
as defined in \eqref{eq:MaslovIndex_def}; the term \emph{non-decreasing} is
defined in Section~\ref{SS4.2}.

The plan for the rest of the note is as follows: in
Section~\ref{sec:existence} we show that an index satisfying the
conditions in Theorem~\ref{def:Dui_index} exists and is unique; in
Section~\ref{sec:properties} we establish some further properties of
this index.  In Section~\ref{sec:Maslov} we prove
Theorem~\ref{thm:extremal_Dui}, providing along the way a short proof
(see Theorem~\ref{thm:ZWZ}) of the Zhou--Wu--Zhu identity
\cite[Thm.~1.1]{ZhoWuZhu_fmc18}, which expresses the so-called
\emph{H\"ormander index} as a difference of Duistermaat indices.
Finally, in Section~\ref{sec:applications}, we prove
\eqref{eq:crazy_formula} and its extensions.  We also give an
elementary derivation of \eqref{eq:crazy_formula} using the Haynsworth
inertia formula.

\subsection*{Notation}

We use $x,y$ to denote vectors in $\C^n$ and $u,v$ to denote
vectors in $\C^n\oplus\C^n$. To save space we write $u=(x, y)$ for the
vector 
$u= \left(\begin{smallmatrix} x \\ y \end{smallmatrix} \right) \in\C^n\oplus\C^n$
with components $x,y\in\C^n$. Similarly, we abbreviate
$(X,Y)=\left(\begin{smallmatrix}X\\Y\end{smallmatrix}\right)$
 for the $(2n\times n)$-matrix
with $(n\times n)$-matrix components $X,Y$. We write
$\ker M=\{x\in\C^n: Mx=0\}$ for the kernel of an $(n\times n)$-matrix
$M$ and $\cX \simeq \cY$ when subspaces $\cX$ and $\cY$ are
isomorphic.

\subsection*{Acknowledgements}

This material is based upon work (G.B.) supported by a grant from the
Institute for Advanced Study (IAS), School of Mathematics.  G.B. also
acknowledges support from the NSF Grants DMS-2247473, DMS--2510345 and
from the Association of Former Students of Texas A\&M University
(through the Faculty Development Leave program that supported the
leave to IAS).  G.C. acknowledges the support of NSERC grants
RGPIN-2017-04259 and RGPIN-2025-06186.  Y.L. was supported by the NSF grant
DMS-2106157, and would like to thank the Courant Institute of
Mathematical Sciences at NYU and especially Prof. Lai-Sang Young for
their hospitality. S.S. was supported in part by the NSF Grants
DMS-2418900 and DMS--2510344, Simons Foundation grant
MP--TSM--00002897, grant no. $2024154$ from the U.S.-Israel Binational
Science Foundation Jerusalem, Israel, and by the Office of the Vice
President for Research \& Economic Development (OVPRED) at Auburn
University through the Research Support Program grant.

The authors are extremely grateful to an anonymous referee who read
the paper thoroughly and suggested numerous corrections and
clarifications, substantially improving the manuscript.

\section{Existence and uniqueness of the Duistermaat index}
\label{sec:existence}

In Theorem~\ref{thm:existence_Duistermaat} we define a function $\iD$ that satisfies the conditions of
Theorem~\ref{def:Dui_index}.  Uniqueness then follows from the
observation that these conditions provide a prescription for the
computation of $\iD$; see Section~\ref{ssec:uniqueness} for details.


\subsection{Existence of the index}

\begin{theorem}
  \label{thm:existence_Duistermaat}
  Given a triple $(\cL_1, \cL_2, \cL_3)$ of Lagrangian planes, define
  a Hermitian form $\Qform$ on
  $\cL_1\oplus\cL_2\oplus\cL_3$ by
  \begin{align}
    \label{eq:bigQ}
    \Qform(u_1, u_2, u_3; v_1, v_2, v_3)
    &= \omega(u_1,v_2) +\omega(u_2,v_3) +\omega(u_3,v_1)
    \\ \nonumber
    &\qquad - \omega(u_2,v_1) - \omega(u_3,v_2) - \omega(u_1,v_3).
  \end{align}
  Then the index $\iD(\cL_1, \cL_2, \cL_3)$ defined by
  \begin{equation}
    \label{eq:Dui_via_Q}
    \iD(\cL_1, \cL_2, \cL_3) := n_-(\Qform) - n + \dim \cL_1\cap\cL_3
    \end{equation}
    satisfies the conditions of Theorem~\ref{def:Dui_index}.
\end{theorem}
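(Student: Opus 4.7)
My plan is to verify the three axioms of \Cref{def:Dui_index} and the bound \eqref{eq:iD_bounds} directly from the formula \eqref{eq:Dui_via_Q}. First observe that $\Qform$ is Hermitian sesquilinear: applying the identity $\omega(v,u) = -\overline{\omega(u,v)}$ to each of the six terms of \eqref{eq:bigQ} yields $\overline{\Qform(y,x)} = \Qform(x,y)$, so $n_-(\Qform)$ is well defined. Symplectic invariance \eqref{eq:sympl_inv} is then immediate: any symplectic $\mathsf{S}$ restricts to a linear isomorphism $\cL_1 \oplus \cL_2 \oplus \cL_3 \to \mathsf{S}\cL_1 \oplus \mathsf{S}\cL_2 \oplus \mathsf{S}\cL_3$ that intertwines the two copies of $\Qform$ (since $\omega$ is $\mathsf{S}$-invariant) and carries $\cL_1 \cap \cL_3$ bijectively onto $\mathsf{S}\cL_1 \cap \mathsf{S}\cL_3$.

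For the normalization axiom, I parametrize $(\Hor, \cG_A, \Ver)$ by $x_1 = (a,0)$, $x_2 = (b, Ab)$, $x_3 = (0, c)$, and likewise for $y$. A direct expansion of \eqref{eq:bigQ} produces the $3n \times 3n$ representing matrix
\[
M = \begin{pmatrix} 0 & A & -I \\ A & 0 & I \\ -I & I & 0 \end{pmatrix}.
\]
Two explicit congruences reduce $M$ to block-diagonal form. First, the unitary mix $(a,b) \mapsto ((a+b)/\sqrt 2, (a-b)/\sqrt 2)$ in the first two blocks converts the off-diagonal $A$'s to $\operatorname{diag}(A, -A)$. A triangular congruence then absorbs the newly produced $-A$ into the coupling with the third block, leaving $M$ congruent to $A \oplus \bigl(\begin{smallmatrix} 0 & -\sqrt 2 I \\ -\sqrt 2 I & 0\end{smallmatrix}\bigr)$. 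The antidiagonal $2n \times 2n$ summand has $n$ positive and $n$ negative eigenvalues, so $n_-(\Qform) = n_-(A) + n$. Combined with $\dim(\Hor \cap \Ver) = 0$, the formula \eqref{eq:Dui_via_Q} yields $\iD(\Hor, \cG_A, \Ver) = n_-(A)$, as required.

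The cocycle property is the main obstacle; my strategy is to reduce it to an algebraic identity for graph triples and then extend by approximation. In a generic configuration the four Lagrangians admit a common transversal $\cM$; by symplectic invariance we may take $\cM = \Ver$, so each $\cL_i = \cG_{A_i}$ is the graph of a Hermitian matrix. For such graph triples, a computation in the spirit of the normalization step (parametrize, form the representing matrix of $\Qform$, and apply congruences now keyed on the differences $A_j - A_i$) evaluates each $\iD(\cG_{A_i}, \cG_{A_j}, \cG_{A_k})$ as $n_-(A_j - A_i) - n_-(A_k - A_i) + n_-(A_k - A_j)$, the formula of \Cref{thm:general_formula}. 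Substituting the four such expressions into \eqref{eq:cocyle_identity} makes every $n_-(A_s - A_r)$ appear exactly twice with opposite signs, so the alternating sum telescopes to zero. Non-generic configurations are handled by approximation: a small symplectic perturbation places the four Lagrangians in the generic locus, and one passes to the limit using the upper semi-continuity of $n_-(\Qform)$ and $\dim \cL_i \cap \cL_j$ together with the integer-valued nature of the alternating sum.

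Finally, the bound $0 \le \iD \le n$ follows either from the three axioms via the reduction in \Cref{rem:unique}, or directly from an inertia count for $M$: the vanishing diagonal blocks of $\Qform$ (a consequence of each $\cL_i$ being Lagrangian) force $n - \dim(\cL_1 \cap \cL_3) \le n_-(\Qform) \le 2n - \dim(\cL_1 \cap \cL_3)$. The main technical hurdle throughout is the cocycle step: the graph-triple evaluation requires a careful generalization of the block-diagonalization above, and the extension from generic to non-generic configurations demands a delicate semi-continuity argument for an alternating sum of four integer-valued upper semi-continuous functions.
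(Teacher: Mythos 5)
Your normalization computation, the Hermitian-ness of $\Qform$, and symplectic invariance are all fine, and your generic-case cocycle argument (pass to a common transversal, write all four planes as graphs, and telescope the resulting $n_-$ differences) is sound in outline — it is in essence the paper's own mechanism, which proves the congruence \eqref{eq:Qmatrix_similar} and thereby the identity $\iD=\left(\cbd\widetilde{n}_-\right)(R_1^\epsilon,R_2^\epsilon,R_3^\epsilon)$ of \eqref{eq:Dui_index_gen_cbd}, so that the cocycle property is just $\cbd^2\widetilde{n}_-=0$. The genuine gap is your treatment of the ``non-generic'' configurations. First, the case split is spurious: a common Lagrangian transversal to any \emph{finite} family of Lagrangian planes always exists (for a frame $(X,Y)$ of $\cL$, the graph $\cG_{cI}$ meets $\cL$ nontrivially only when $\det(Y-cX)=0$, a nonzero polynomial in $c$; this is exactly the finiteness of $E(\cL)$ used in the paper, where the admissible $\epsilon$-Robin maps correspond to the transversal $\cG_{-\epsilon^{-1}I}$). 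So choosing $\epsilon$ outside $E(\cL_1)\cup\cdots\cup E(\cL_4)$ disposes of every configuration at once and no limiting argument is needed. Second, the limiting argument you propose instead does not work as stated: $n_-$ is \emph{lower} semicontinuous (strictly negative eigenvalues persist under perturbation), not upper semicontinuous, while $\dim\cL_i\cap\cL_j$ is upper semicontinuous, and in any case semicontinuity of the individual terms does not transport an identity from a dense open set to its complement. The four terms of the cocycle sum are genuinely discontinuous — already $\iD(\Hor,\cG_{tI},\Ver)=n_-(tI)$ jumps at $t=0$ — and an integer-valued function can vanish on a dense set without vanishing at a limit point; to push the identity through the limit you would have to control the jumps of all four terms simultaneously, which is precisely the content you are trying to prove. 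Also note that the graph-triple evaluation you invoke (``a computation in the spirit of the normalization step'') is the real heart of the matter and must actually be carried out; it is the congruence \eqref{eq:Qmatrix_similar} specialized to frames $(I,A_i)$, together with the kernel identification \eqref{eq:n0R} needed to handle the $\dim\cG_{A_i}\cap\cG_{A_k}=n_0(A_k-A_i)$ correction in \eqref{eq:Dui_via_Q}.

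A smaller but real defect is the bound $0\le\iD\le n$. Your first suggested route, ``from the three axioms via the reduction in \Cref{rem:unique},'' is insufficient: that reduction expresses $\iD(\cL_1,\cL_2,\cL_3)$ as an alternating sum of three indices each lying in $[0,n]$, which a priori only bounds it between $-n$ and $2n$; this is why the paper proves the separate positivity statement \eqref{eq:Grahams_vertical} (that $\epsilon^{-1}I-R_j^\epsilon\ge0$ for small $\epsilon>0$). Your second route can be made to work, but needs more than ``vanishing diagonal blocks'': restrict $\Qform$ to the subspace $\cL_1\oplus 0\oplus\cL_3$, where it reduces to the off-diagonal pairing $\omega(x_3,y_1)-\omega(x_1,y_3)$ whose kernel on each factor is $\cL_1\cap\cL_3$ because the planes are Lagrangian; the restriction therefore has inertia $(n-d,\,n-d,\,2d)$ with $d=\dim\cL_1\cap\cL_3$, and eigenvalue interlacing for a codimension-$n$ restriction gives $n-d\le n_-(\Qform)\le 2n-d$, which is the claimed bound.
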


\begin{remark}
  \label{rem:you_worry_too_much}
  This is more direct than the definition of $\iD$ given in \cite{Dui_am76,ZhoWuZhu_fmc18}, 
  which requires choosing an arbitrary fourth Lagrangian plane.
  One benefit of
  equation \eqref{eq:Dui_via_Q} is that $\iD$ is evidently a
  symplectic invariant. This definition is not optimal
  for practical computations, however, since $\Qform$ is defined on a
  $3n$-dimensional space.  One also notices a lack of symmetry caused
  by the $\dim \cL_1\cap\cL_3$ term.
  Both of
  these concerns are addressed by a more practical (and more
  symmetric) formula \eqref{eq:Dui_index_gen} for the index given in Proposition~\ref{prop:Dui_via_R} below. 
\hfill$\Diamond$\end{remark}

Before proving the theorem, we recall that any $n$-dimensional subspace
$ \cM \subset \C^n \oplus \C^n$ can be described by a \term{frame},
which is an injective linear operator
\begin{equation}
  \label{eq:frame}
  Z = \Lframe{X}{Y} \colon \C^n \to \C^n\oplus\C^n
\end{equation}
whose range is $\cM=\{(Xx,Yx): x\in\C^n\}$. Moreover, $\cM$ is Lagrangian if and only if 
$X^*Y = Y^*X$, as easily seen from \eqref{eq:sympl_form} and \eqref{eq:sympl_complement}.

This description is not unique, but it is easy to see that frames $Z$
and $\tilde Z$ describe the same subspace if and only if
$\tilde Z = ZC$ for some invertible $C \colon \C^n \to
\C^n$. Therefore, the set
\begin{equation}
  \label{eq:Edef}
  E(\cM) := \{ \epsilon \in \R : X + \epsilon Y \text{ is not invertible} \}
\end{equation}
and the so-called \emph{$\epsilon$-Robin map} $R^\epsilon:\C^n \to \C^n$,
\begin{equation}
  \label{eq:almostDTN}
  R^\epsilon := Y(X + \epsilon Y) ^{-1},
  \qquad
  \epsilon \in \R \backslash E(\cM),
\end{equation}
are independent of the choice of frame. When $\cM$
is Lagrangian, the set $E(\cM)$ is finite,
since $\det(X + \epsilon Y)$ is a polynomial in $\epsilon$ that is not
identically zero.\footnote{Setting $\epsilon=i$, we have $\ker(X+iY) =
  \ker(X+iY)^*(X+iY) = \ker(X^*X + Y^*Y) = \ker X \cap \ker Y = \{0\}$, using 
  $X^*Y = Y^*X$ and the injectivity of $Z$, so we conclude that
  $\det(X+iY)\neq0$.}  The Lagrangian condition $X^*Y = Y^*X$ also implies that
\begin{equation}
  \label{eq:HermitianR}
  (X+\epsilon Y)^* R^\epsilon (X+\epsilon Y) = X^*Y + \epsilon Y^* Y
\end{equation}
is Hermitian, therefore $R^\epsilon$ is Hermitian for all $\epsilon \in \R \backslash E(\cM)$.

\newcommand\Xterm[2]{X_{#1}^*Y_{#2}-Y_{#1}^*X_{#2}}
\newcommand\Yterm[2]{Y_{#1}^*X_{#2}-X_{#1}^*Y_{#2}}
\newcommand\Rterm[2]{R_{#1}^\epsilon-R_{#2}^\epsilon}

\begin{proposition}
  \label{prop:Dui_via_R}
  Let $\cL_1$, $\cL_2$ and $\cL_3$ be Lagrangian planes and $R^\epsilon_1$, $R^\epsilon_2$ and $R^\epsilon_3$ the
  corresponding $\epsilon$-Robin maps, for any real
  $\epsilon$ outside the finite set
  $E_{123} := E(\cL_1)\cup E(\cL_2)\cup E(\cL_3)$.
  Then, for the index $\iD$ defined in \eqref{eq:Dui_via_Q}
  and any $\epsilon\in \R\backslash E_{123}$,
  we have
  \begin{align}
    \label{eq:Dui_index_gen}
    \iD(\cL_1,\cL_2,\cL_3) 
    &= n_-\Big(\Rterm21\Big)
    - n_-\Big(\Rterm31\Big)
      + n_-\Big(\Rterm32\Big) \\
    \label{eq:Dui_index_gen_cbd}
    &= \left(\cbd\widetilde{n}_-\right)\left(R_1^\epsilon, R_2^\epsilon,
      R_3^\epsilon\right),
  \end{align}
  where $\widetilde{n}_-(X,Y) := n_-(Y-X)$. In particular, the
  right-hand side of \eqref{eq:Dui_index_gen}, when 
  defined, is independent of $\epsilon$.
\end{proposition}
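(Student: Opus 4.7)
The plan is to reduce the proposition to a statement about triples of graphs of Hermitian matrices, which can then be handled directly by the axioms. For $\epsilon\in\R$ consider the shear
\begin{equation*}
  T_\epsilon(x_1,x_2) := (x_1 + \epsilon x_2,\, x_2),
\end{equation*}
which preserves the symplectic form $\omega$ because $\epsilon$ is real. If $\cL$ has frame $\binom{X}{Y}$, then $T_\epsilon \cL$ has frame $\binom{X+\epsilon Y}{Y}$, and for $\epsilon\notin E(\cL)$ this is equivalent (after right-multiplying by $(X+\epsilon Y)^{-1}$) to $\binom{I}{R^\epsilon}$. Hence $T_\epsilon \cL = \cG_{R^\epsilon}$. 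Applying $T_\epsilon$ to all three planes and invoking symplectic invariance (axiom \eqref{item:sympl_inv}) reduces the proposition to
\begin{equation*}
  \iD(\cG_A, \cG_B, \cG_C) = n_-(B-A) - n_-(C-A) + n_-(C-B)
\end{equation*}
for Hermitian $A,B,C$ (to be applied with $A=R_1^\epsilon$, $B=R_2^\epsilon$, $C=R_3^\epsilon$).

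This identity is the content of \Cref{thm:general_formula}, but at this point in the paper it is not yet available, so I would establish it inline using only the axioms. Introduce a second shear $S_A(x_1,x_2) := (x_1,\, x_2 - A x_1)$; a short calculation shows $S_A$ is symplectic precisely because $A=A^*$. It sends $\cG_A\mapsto \Hor$, $\cG_B\mapsto \cG_{B-A}$, $\cG_C\mapsto \cG_{C-A}$, and fixes $\Ver$. Applying $S_A$ and then invoking the cocycle identity \eqref{eq:cocyle_identity} with $\cL_4=\Ver$ gives
\begin{equation*}
  \iD(\Hor, \cG_{B-A}, \cG_{C-A})
  = \iD(\Hor, \cG_{B-A}, \Ver) - \iD(\Hor, \cG_{C-A}, \Ver) + \iD(\cG_{B-A}, \cG_{C-A}, \Ver).
\end{equation*}
The first two terms equal $n_-(B-A)$ and $n_-(C-A)$ by the normalization axiom \eqref{eq:normalization}. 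For the third, apply the symplectic shear $S_{B-A}$ to reduce it to $\iD(\Hor, \cG_{C-B}, \Ver) = n_-(C-B)$, which completes the proof of the graph identity and hence of \eqref{eq:Dui_index_gen}. The coboundary form \eqref{eq:Dui_index_gen_cbd} is then immediate from \eqref{eq:coboundary_def} with $k=3$.

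The conceptual step is recognizing that one symplectic shear $T_\epsilon$ puts all three Lagrangians into graph form simultaneously; after that, the computation is a bookkeeping exercise combining the three axioms. The only technical checks are that $T_\epsilon$ and $S_A$ are symplectic and that the condition $\epsilon\notin E_{123}$ is exactly what ensures the invertibility needed at each step. A pleasant byproduct of this proof is that the $\epsilon$-independence of the right-hand side of \eqref{eq:Dui_index_gen} becomes transparent, since different admissible $\epsilon$ simply correspond to different symplectic conjugations of the same triple of Lagrangians.
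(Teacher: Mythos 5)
Your argument is correct as a formal deduction, but at this point in the paper it is circular, so it does not prove \Cref{prop:Dui_via_R} as stated. The proposition concerns the concretely defined index (via the form $\Qform$ of \eqref{eq:bigQ}, equivalently the formula referenced in the statement), and the paper then uses \eqref{eq:Dui_index_gen}--\eqref{eq:Dui_index_gen_cbd} to prove \Cref{thm:existence_Duistermaat}: the normalization axiom is verified through \eqref{eq:n_with_vertical}, and the cocycle property is deduced from \eqref{eq:Dui_index_gen_cbd} together with $\cbd^2\equiv 0$. At this stage no function satisfying the three axioms is yet known to exist --- \Cref{rem:unique} gives only uniqueness --- and existence is exactly what this proposition is used to establish. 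Since your proof invokes normalization, symplectic invariance and the cocycle identity, what it actually shows is the conditional statement that \emph{any} index satisfying the axioms obeys \eqref{eq:Dui_index_gen}; indeed it is essentially the uniqueness algorithm combined with the paper's later proof of \Cref{thm:general_formula} (your shear $S_A$ is \eqref{eq:SAB} and your cocycle step is \eqref{eq:cocyle_in_action}). It says nothing about the index defined by the explicit formula, which is the object the proposition is about.

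The substantive content you are missing is the linear algebra that links the definition \eqref{eq:Dui_via_Q} to the Robin maps without presupposing the axioms: writing $\Qform$ in frame coordinates to get \eqref{eq:Qmatrix}, verifying the congruence \eqref{eq:Qmatrix_similar} with the invertible matrices $T$ and $D$ of \eqref{eq:TandD}, identifying the kernels via \eqref{eq:n0R}, and applying Sylvester's law of inertia to reach \eqref{eq:formula_Dui}. That said, your opening observation is correct and genuinely useful: the real shear $T_\epsilon(x_1,x_2)=(x_1+\epsilon x_2,\,x_2)$ is symplectic and maps a plane with frame $(X,Y)^T$, $\epsilon\notin E(\cL)$, onto $\cG_{R^\epsilon}$, and since symplectic invariance of the explicitly defined index is immediate from \eqref{eq:Dui_via_Q}, this legitimately reduces the proposition to evaluating $\iD(\cG_A,\cG_B,\cG_C)$ \emph{from the explicit definition}, i.e.\ to computing the inertia of the matrix of $\Qform$ for three graphs with frames $(I,A_j)^T$ (and it makes the $\epsilon$-independence transparent). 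But that inertia computation --- essentially the congruence by $T$ --- still has to be done by hand; replacing it with appeals to the normalization and cocycle axioms is precisely the step you are not yet entitled to make.
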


\begin{proof}
  We follow the proof of a similar identity in \cite[Lem.
  8.3]{Meinrenken_SymplecticGeometry} (attributed therein to Brian
  Feldstein). We first compute the form $\Qform$ defined in \eqref{eq:bigQ} as a
  Hermitian form on $\C^n\oplus\C^n\oplus\C^n$, where each $\C^n$
  parametrizes the corresponding Lagrangian plane $\cL_j$ via a frame
  $(X_j, Y_j)$.  If, for instance,
  \begin{equation}
    \label{eq:xy_uv}
    u = (X_1 x, Y_1 x)
    \in \cL_1, \qquad
    v = (X_2y , Y_2 y)
    \in \cL_2,
  \end{equation}
  then
  \begin{equation}
    \label{eq:wxy}
    \omega(u,v)
    = \big\langle x, (\Xterm12) y \big\rangle_{\C^n}.
  \end{equation}
  Similar calculations for the other terms in \eqref{eq:bigQ} result
  in $\Qform$ being represented by the formula
  \[\Omega(u_1,u_2,u_3;v_1,v_2,v_3)
    =\big\langle
    (x_1,x_2,x_3), W (y_1,y_2,y_3)
    \big\rangle_{\C^n\oplus\C^n\oplus\C^n}, \]
 where $u_j=(X_jx_j,Y_jx_j), v_j=(X_jy_j,Y_jy_j)$, $j\in\{1,2,3\}$, with the Hermitian matrix
  \begin{equation}
    \label{eq:Qmatrix}
    \Qmat =
    \begin{pmatrix}
      0 & \Xterm12 & \Yterm13 \\
      \Yterm21 & 0 & \Xterm23 \\
      \Xterm31 & \Yterm32 & 0
    \end{pmatrix}.
  \end{equation}
  We now note that for $\epsilon \in \R \backslash E_{123}$ and $j,k\in\{1,2,3\}$,
  \begin{align}
    \label{eq:Xterm_R}
    \Xterm{j}{k}
    &= (X_j + \epsilon Y_j)^* Y_k - Y_j^* (X_k + \epsilon Y_k)
    \nonumber \\
    &= (X_j + \epsilon Y_j)^* (\Rterm{k}{j})
      (X_k + \epsilon Y_k),
  \end{align}
  where we used the fact that $R^\epsilon$ is Hermitian.  We now
  verify by direct computation that
  \begin{equation}
    \label{eq:Qmatrix_similar}
    \Qmat = \frac12 D^*T^*
    \begin{pmatrix}
      \Rterm32 & 0 & 0\\
      0 & \Rterm13 & 0 \\
      0 & 0 & \Rterm 21
    \end{pmatrix}
    TD,
  \end{equation}
  where the matrices
  \begin{equation}
    \label{eq:TandD}
    T :=
    \begin{pmatrix}
      -I & \phantom{-}I & \phantom{-}I \\
      \phantom{-}I & -I & \phantom{-}I \\
      \phantom{-}I & \phantom{-}I & -I
    \end{pmatrix},
    \qquad
    D :=
    \begin{pmatrix}
      X_1+\epsilon Y_1 & 0 & 0\\
      0 & X_2+\epsilon Y_2 & 0\\
      0 & 0 & X_3+\epsilon Y_3
    \end{pmatrix}
  \end{equation}
  are invertible for $\epsilon \in \R \backslash E_{123}$.

  We next observe that, by \eqref{eq:Xterm_R} and invertibility
  of $X_k+\epsilon Y_k$,
  \begin{equation}
    \label{eq:kerRX}
    \ker\left(\Rterm{k}{j}\right)
    \simeq \ker\big(\Xterm{j}{k}\big).
  \end{equation}
  We then have
  \begin{align}
    \label{eq:kerR+kerX}
    \ker\big(\Xterm{j}{k}\big)
    &= \big\{y \in \C^n \colon
    0=\left<x, (\Xterm{j}{k}) y \right>_{\C^n}
    \text{ for all } x\in\C^n\big\} \nonumber \\
    &\simeq \big\{v \in \cL_k \colon \omega(u, v) = 0
      \text{ for all } u \in \cL_j \big\} \nonumber \\
    &= \cL_j \cap \cL_k,
  \end{align}
  where we used \eqref{eq:wxy} to get to the second line and the
  definition of a Lagrangian plane in \eqref{eq:sympl_complement} to
  get to the third.  We can now conclude that
  \begin{equation}
    \label{eq:n0R}
    n_0\big(\Rterm{k}{j}\big) = \dim \cL_j \cap \cL_k,
  \end{equation}
  where $n_0(A) = \dim \ker A$.

Finally, applying Sylvester's law of inertia to
  \eqref{eq:Qmatrix_similar}, we use the identity
    \begin{equation}
    \label{eq:symmetry_nminus}
    n_-(\Rterm13) = n_+(\Rterm31) = n - n_0(\Rterm31) - n_-(\Rterm31),
  \end{equation}
along with \eqref{eq:n0R}, to obtain 
 \begin{equation}
    \label{eq:formula_Dui}
    n_-(\Qform) = n - \dim \cL_1 \cap \cL_3 + n_-(\Rterm32) - n_-(\Rterm31) +
      n_-(\Rterm21),
  \end{equation}
  which is equivalent to \eqref{eq:Dui_index_gen}.  
\end{proof}

\begin{remark}
  \label{rem:n0Q}
  For future use we observe that \eqref{eq:Qmatrix_similar} together
  with \eqref{eq:n0R} gives
  \begin{equation}
    \label{eq:n0Q}
    n_0(\Qform) = \dim\cL_1\cap\cL_2 + \dim\cL_1\cap\cL_3 + \dim\cL_2\cap\cL_3.
  \end{equation}
\hfill$\Diamond$\end{remark}

\begin{proof}[Proof of Theorem~\ref{thm:existence_Duistermaat}]
  To check the normalization condition \eqref{eq:normalization}, we apply \eqref{eq:Dui_index_gen}
  with $\cL_1 = \Hor$, $\cL_2 = \cG_A$ and $\cL_3 = \Ver$. For small positive $\epsilon$ we have
  $R_1^\epsilon = 0$, 
  $R_2^\epsilon =A(I+\epsilon A)^{-1} 
  = A + o(1)$ and $R_3^\epsilon = \epsilon^{-1} I$, 
  therefore $R_3^\epsilon-R_2^\epsilon=\epsilon^{-1}(I-\epsilon A+o(\epsilon))$ and so
  \begin{equation}
    \label{eq:n_with_vertical}
    n_-\big( R^\epsilon_2-R^\epsilon_1 \big) = n_-(A),
    \qquad
    n_-\big( R^\epsilon_3-R^\epsilon_2 \big) = 0,
    \qquad
    n_-\big( R^\epsilon_3-R^\epsilon_1 \big) = 0.
  \end{equation}
  The normalization condition then follows from \eqref{eq:Dui_index_gen}.

  Symplectic invariance of $\iD$ follows from the symplectic invariance of every
  term in \eqref{eq:bigQ}, and the cocycle property $\cbd\!\iD=0$ follows from~\eqref{eq:Dui_index_gen_cbd}
  by the well-known identity $\cbd^2 \equiv 0$ for the coboundary
  operator, which is easily verified from \eqref{eq:coboundary_def}.

  Finally, to estimate the range of the possible values of the index,
  we use a symplectic transformation $\mathsf{S}$ such that
  $\mathsf{S}\cL_3 = \cG_\infty$ to reduce the question to
  bounding the range of $\iD(\cL_1,\cL_2,\cG_\infty)$.  That the
  latter is between $0$ and $n$ follows from a version of equation
  \eqref{eq:Dui_index_gen}: 
  \begin{equation}
    \label{eq:Grahams_vertical}
    \iD(\cL_1,\cL_2,\cG_\infty) 
    = n_-\Big(\Rterm21\Big)
    \qquad
    \text{for all }0 < \epsilon \ll 1.
  \end{equation}
  Equation~\eqref{eq:Grahams_vertical} was already established in
  \cite[Eq.~(3.32)]{BerCoxLatSuk_prep23}, but we repeat the proof here
  for completeness.  We use equation~\eqref{eq:Dui_index_gen} with
  $R^\epsilon_3 = \epsilon^{-1} I$, and the desired result follows
  once we establish that
  $n_-\left(\Rterm31\right)=0=n_-\left(\Rterm32\right)$.  In other
  words, we claim that for any Lagrangian frame $(X, Y)^T$, the
  Hermitian operator $\epsilon^{-1} I - Y(X+\epsilon Y)^{-1}$ is
  non-negative definite for small enough $\epsilon>0$.  It is
  equivalent to consider
  \begin{equation}
    \label{eq:similarity}
    \epsilon (X+\epsilon Y)^*
    \left(\epsilon^{-1} I - Y(X+\epsilon Y)^{-1}\right)
    (X+\epsilon Y)
    =  X^* X+\epsilon Y^* X.
  \end{equation}
  The right-hand side is identically zero on $\ker(X^*X) = \ker X$,
  while on $\ker(X^*X)^\perp$ it is a Hermitian perturbation of
  a positive definite matrix.
\end{proof}

\subsection{Proofs of Theorems~\ref{def:Dui_index} and
  \ref{thm:general_formula}}
\label{ssec:uniqueness}

\begin{proof}[Proof of Theorem~\ref{def:Dui_index}]
  Theorem~\ref{thm:existence_Duistermaat} proves the existence of a
  function $\iD$ satisfying the axioms in
  Theorem~\ref{def:Dui_index}. 
%
  Uniqueness follows from the observation that these axioms provide a
  prescription for the computation of $\iD$.  First we reduce the
  computation to the case when $\cL_3$ is transversal to both $\cL_1$
  and $\cL_2$ by using the cocycle identity with \emph{some} $\cL_4$
  that is transversal to the other three.  We can then find a
  symplectic transformation that maps $\cL_3$ to $\Ver$ and $\cL_1$ to
  $\Hor$ (which is well-known to be possible, see for example
  \cite[Exercise 8.1]{Meinrenken_SymplecticGeometry}).  Since this
  transformation preserves transversality, the transformed $\cL_2$ can
  be represented as the graph of a Hermitian matrix, and
  \eqref{eq:normalization} can be used.
\end{proof}

\begin{proof}[Proof of Theorem~\ref{thm:general_formula}]
  Choosing the frame $(I, A)$ for the Lagrangian plane $\cL_1 =
  \cG_A$ and setting $\epsilon=0$, we get $R_1^0 = A$.  The same process
  works for $\cG_B$ and $\cG_C$, and \eqref{eq:Grahams_graphs} follows
  from \eqref{eq:Dui_index_gen}.
\end{proof}

\section{Further properties of the Duistermaat index}
\label{sec:properties}

We now derive useful properties of the Duistermaat index, some of which involve algebraic 
operations that can be defined when Lagrangian subspaces are viewed as 
self-adjoint linear relations.

\subsection{General identities}

\begin{theorem}
  \label{thm:properties}
  The Duistermaat index has the following properties.
  \begin{enumerate}
  \item \label{item:spec_values} (Special values)
    For any Lagrangian $\cL$ and $\cM$,
    \begin{equation}
      \label{eq:spec_values1}
      \iD(\cL, \cL, \cM) = \iD(\cM, \cL, \cL) = 0,  \qquad
      \iD(\cL, \cM, \cL) = n - \dim\cL\cap\cM.
    \end{equation}

  \item \label{item:permutation} (Permuting the arguments)
    \begin{align}
      \label{eq:swap12}
      \iD(\cL_1,\cL_2,\cL_3) + \iD(\cL_2,\cL_1,\cL_3)
      &= n - \dim \cL_1 \cap \cL_2, \\
      \label{eq:swap23}
      \iD(\cL_1,\cL_2,\cL_3) + \iD(\cL_1,\cL_3,\cL_2)
      &= n - \dim \cL_2 \cap \cL_3, \\
      \label{eq:swap13}
      \iD(\cL_1,\cL_2,\cL_3) + \iD(\cL_3,\cL_2,\cL_1)
      &= n - \dim \cL_1 \cap \cL_2 - \dim \cL_2 \cap \cL_3
        + \dim\cL_1\cap\cL_3, \\
      \label{eq:iDshift1}
      \iD(\cL_1,\cL_2,\cL_3) - \dim\cL_1\cap\cL_3
      &= \iD(\cL_3,\cL_1,\cL_2) - \dim\cL_2\cap\cL_3 \\
      \label{eq:iDshift-1}
      &= \iD(\cL_2,\cL_3,\cL_1) - \dim\cL_1\cap\cL_2.
    \end{align}
  \item \label{item:additivity} (Symplectic additivity) For any three
    Lagrangians $\cL_1$, $\cL_2$, $\cL_3$ in a symplectic space $V$
    and any three Lagrangians $\cM_1$, $\cM_2$, $\cM_3$ in a
    symplectic space $W$,
    \begin{equation}
      \label{eq:additivity}
      \iD(\cL_1\oplus\cM_1, \cL_2\oplus\cM_2, \cL_3\oplus\cM_3)
      = \iD(\cL_1,\cL_2,\cL_3) + \iD(\cM_1,\cM_2,\cM_3).
    \end{equation}
  \item \label{item:antisymplectic} (Anti-symplectic transformation)
    If $\mathsf{A}$ is an anti-symplectic linear transformation on
    $(\C^n\oplus\C^n, \omega)$, i.e., a bijection that satisfies
    $\omega(\mathsf{A}u, \mathsf{A}v) = -\omega(u,v)$, then
    \begin{equation}
      \label{eq:antisymplectic}
      \iD(\mathsf{A}\cL_1, \mathsf{A}\cL_2, \mathsf{A}\cL_3)
      = \iD(\cL_3,\cL_2,\cL_1).
    \end{equation}
  \end{enumerate}
\end{theorem}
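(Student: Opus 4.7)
The plan is to derive all four properties as direct consequences of the two explicit formulas just established: the Robin-map formula \eqref{eq:Dui_index_gen} from \Cref{prop:Dui_via_R} and the quadratic form definition \eqref{eq:Dui_via_Q}. Two elementary ingredients will be used throughout: the identity $n_-(X) + n_-(-X) + n_0(X) = n$ for Hermitian $X$ (equivalently $n_-(-X) = n_+(X)$), and the kernel calculation $n_0(R_j^\epsilon - R_k^\epsilon) = \dim \cL_j \cap \cL_k$ recorded in \eqref{eq:n0R}.

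For the special values in item (\ref{item:spec_values}), substitute the appropriate coincidence of Lagrangians into \eqref{eq:Dui_index_gen}. If $\cL_1 = \cL_2$ then $R_1^\epsilon = R_2^\epsilon$, so the first summand vanishes and the remaining two cancel; the case $\cL_2 = \cL_3$ is symmetric. If $\cL_1 = \cL_3$ the middle summand is zero and the two remaining terms combine as $n_-(R_2^\epsilon - R_1^\epsilon) + n_+(R_2^\epsilon - R_1^\epsilon) = n - \dim\cL\cap\cM$. For the permutation identities in item (\ref{item:permutation}), the swap formulas \eqref{eq:swap12}--\eqref{eq:swap13} follow by writing the two indices in each sum via \eqref{eq:Dui_index_gen} and canceling: in every case what survives is a pair $n_-(R_j^\epsilon - R_k^\epsilon) + n_-(R_k^\epsilon - R_j^\epsilon) = n - \dim\cL_j\cap\cL_k$. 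The cyclic-shift identities \eqref{eq:iDshift1}--\eqref{eq:iDshift-1} can then be obtained by combining these three swaps, or more directly by computing $\iD(\cL_1,\cL_2,\cL_3) - \iD(\cL_3,\cL_1,\cL_2)$ from \eqref{eq:Dui_index_gen} and collapsing the resulting sum of $\pm n_-$ terms the same way.

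Symplectic additivity (\ref{item:additivity}) is most transparent from \eqref{eq:Dui_via_Q}. On $V\oplus W$ with the direct-sum symplectic structure, any $\omega$-pairing between a vector from $V$ and a vector from $W$ is zero; hence, after rearranging summands, the form $\Qform$ associated to $(\cL_1\oplus\cM_1, \cL_2\oplus\cM_2, \cL_3\oplus\cM_3)$ is the orthogonal direct sum of the forms associated to $(\cL_1,\cL_2,\cL_3)$ and $(\cM_1,\cM_2,\cM_3)$. Since $n_-$, the half-dimension $n_V + n_W$, and the intersection dimension $\dim(\cL_1\oplus\cM_1)\cap(\cL_3\oplus\cM_3) = \dim\cL_1\cap\cL_3 + \dim\cM_1\cap\cM_3$ all split additively, so does $\iD$. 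For the anti-symplectic property (\ref{item:antisymplectic}), a direct inspection of \eqref{eq:bigQ} shows that replacing $\omega$ by $-\omega$ turns $\Qform$ into the form built from the original $\omega$ on the reversed triple $(\cL_3,\cL_2,\cL_1)$, since the three positive and three negative terms are exactly interchanged. Transporting through the bijection $\mathsf{A}$ and using $\dim \mathsf{A}\cL_1\cap\mathsf{A}\cL_3 = \dim\cL_3\cap\cL_1$, formula \eqref{eq:Dui_via_Q} then gives $\iD(\mathsf{A}\cL_1,\mathsf{A}\cL_2,\mathsf{A}\cL_3) = \iD(\cL_3,\cL_2,\cL_1)$.

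No step presents a serious obstacle; once \Cref{prop:Dui_via_R} is in hand, each property is a mechanical application of \eqref{eq:Dui_index_gen} or \eqref{eq:Dui_via_Q} together with the two elementary ingredients cited above. The only point requiring a moment of care is the combinatorial observation in (\ref{item:antisymplectic}) that the sign reversal induced by $\mathsf{A}$ matches exactly the effect of reversing the ordered triple of Lagrangians in \eqref{eq:bigQ}.
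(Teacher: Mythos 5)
Your proposal is correct, but it reaches several of the identities by a different route than the paper. For the special values you argue exactly as the paper does, from \eqref{eq:Dui_index_gen}. For the permutation identities, however, the paper stays axiomatic: it expands $(\cbd\,\iD)(\cL_2,\cL_1,\cL_2,\cL_3)$ (and the analogous quadruple for \eqref{eq:swap23}) using the cocycle property and the special values, then gets \eqref{eq:swap13}--\eqref{eq:iDshift-1} by composing the two swaps; you instead substitute \eqref{eq:Dui_index_gen} into both sides and cancel, using $n_-(X)+n_-(-X)=n-n_0(X)$ together with \eqref{eq:n0R}. Both work; the paper's version only uses the defining axioms (so it applies verbatim to any function satisfying them), while yours is a self-contained computation from the Robin-map formula -- note only that for \eqref{eq:swap13} what survives is not a single pair but three such pairs, one entering with a minus sign, which is why the $\dim\cL_1\cap\cL_3$ term appears with the opposite sign. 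For symplectic additivity the paper uses direct sums of frames, giving $R^\epsilon_{\cL\oplus\cM}=R^\epsilon_\cL\oplus R^\epsilon_\cM$ and additivity of $n_-$ in \eqref{eq:Dui_index_gen}; your decomposition of $\Qform$ in \eqref{eq:Dui_via_Q} is an equally valid alternative, since $n_-(\Qform)$, the half-dimension, and $\dim(\cL_1\oplus\cM_1)\cap(\cL_3\oplus\cM_3)$ all split. For the anti-symplectic property the paper also starts from the sign change of $\Qform$, but then computes $n_-(-\Qform)=3n-n_0(\Qform)-n_-(\Qform)$ and invokes \eqref{eq:n0Q} and \eqref{eq:swap13}; your observation that reversing the triple in \eqref{eq:bigQ} interchanges the positive and negative terms, so that $-\Qform_{(\cL_1,\cL_2,\cL_3)}$ is congruent to $\Qform_{(\cL_3,\cL_2,\cL_1)}$, combined with the fact that the correction term $\dim\cL_1\cap\cL_3$ is invariant under reversing the triple, is a genuinely shorter argument that bypasses both of those auxiliary identities. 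Your claims check out, so the net effect is a proof that leans entirely on the explicit formulas, whereas the paper deliberately routes part (\ref{item:permutation}) through the axioms it has just established.
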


\begin{proof}
  The first two equalities in \eqref{eq:spec_values1} follow
  directly from \eqref{eq:Dui_index_gen}; for the third we also use
  \eqref{eq:symmetry_nminus} and \eqref{eq:kerR+kerX}.

  To obtain~\eqref{eq:swap12}, we apply the cocycle property
  \begin{align}
    \label{eq:cocycle_for_swap1}
    0 &= (\cbd\!\iD)(\cL_2, \cL_1,\cL_2, \cL_3)
    \\ \nonumber
    &= \iD(\cL_2, \cL_1,\cL_2) - \iD(\cL_2, \cL_1,\cL_3)
    + \iD(\cL_2, \cL_2,\cL_3) - \iD(\cL_1, \cL_2,\cL_3)
  \end{align}
  and use $\iD(\cL_2, \cL_1,\cL_2) = n - \dim\cL_1\cap\cL_2$ and
  $\iD(\cL_2, \cL_2,\cL_3) = 0$.  Equation~\eqref{eq:swap23} follows
  by a similar computation of $(\cbd\!\iD)(\cL_1, \cL_2,\cL_3, \cL_2)$;
  other permutations are obtained by composition of the
  established two.
  
  Symplectic additivity \eqref{eq:additivity} follows by noting that a
  possible choice of frame for $\cL\oplus\cM$ is the direct sum of
  frames.  Therefore, for all but finitely many $\epsilon$, one has
  $R^\epsilon_{\cL\oplus\cM} = R^\epsilon_{\cL} \oplus
  R^\epsilon_{\cM}$.  Equation \eqref{eq:additivity} then follows from
  \eqref{eq:Dui_index_gen} and the additivity of $n_-$ under direct sums.

  Finally, to obtain \eqref{eq:antisymplectic}, we note that the form
  $\Qform$ of \eqref{eq:bigQ} changes sign under an anti-symplectic
  transformation.  Therefore, by \eqref{eq:Dui_via_Q},
  \begin{equation}
    \label{eq:antiQ}
    \iD(\mathsf{A}\cL_1, \mathsf{A}\cL_2, \mathsf{A}\cL_3)
    = n_-(-\Qform) - n + \dim\cL_1\cap\cL_3.
  \end{equation}
  We now use $n_-(-\Qform) = 3n - n_0(\Qform) - n_-(\Qform)$, as well as
  \eqref{eq:n0Q} and \eqref{eq:swap13}, to get the result.
\end{proof}

\subsection{Self-adjoint linear relations}
For a given splitting of the symplectic space (in our case
$\C^n\oplus\C^n$), every Lagrangian plane can be viewed as a
self-adjoint linear relation; see
\cite[Sec.~14.1]{Schmudgen_unboundedSAO} or
\cite[Sec.~4.2]{BooZhu_memAMS}.  In this context, the \emph{difference} of
two Lagrangian planes, $\cL$ and $\cM$, is the Lagrangian plane
\begin{equation}
  \label{eq:difference_def}
  \cL - \cM := \left\{
    (x, y_{_\cL} - y_{_\cM} )  
    \in \C^n \oplus \C^n : (x, y_{_\cL} )  
    \in \cL,\
    (x,  y_{_\cM})  
    \in \cM \right\}.
\end{equation}
This definition is chosen so that
\begin{equation}
  \label{eq:diff_graphs}
  \cG_A - \cG_B = \cG_{A-B}
\end{equation}
for any $n\times n$ Hermitian matrices $A$ and $B$. Another important special case is
\begin{equation}
  \label{eq:diff_inf}
  \cL - \Ver = \Ver = \Ver - \cL
\end{equation}
for any Lagrangian plane $\cL$.

The \emph{inverse} of a Lagrangian plane $\cL$ is
\begin{equation}
	\label{eq:inverse_def}
	\cL^{-1} := \left\{ (y , x) \in \C^n\oplus\C^n : (x,y)
	\in \cL \right\}.
\end{equation}
We define by
\begin{align}
  \label{eq:dom_def}
  \dom\cL &:= \left\{x\in \C^n :
      (x,y)
    \in\cL \text{ for some }y\in\C^n\right\} 
  \\
  \label{eq:mul_def}
  \mul\cL &:= \left\{y \in \C^n :
      (0, y)
    \in\cL \right\}
\end{align}
the \emph{domain} and the \emph{multivalued part} of the linear relation $\cL$. For
instance, if $M$ is an $(n\times n)$ matrix then $\dom\cG_M=\C^n$,
$\mul\cG_M = \{0\}$ and $\mul\cG_M^{-1}=\ker M$; on the other hand $\dom\cG_\infty=\{0\}$ and $\mul\cG_\infty=\C^n$. In general,
we have $\mul\cL = \left(\dom\cL\right)^\perp$ and $\cL$ can be
represented as
\begin{equation}
  \label{eq:gen_Lagr}
  \cL = \big\{(x, Lx+y) \colon x\in \dom\cL,\ y\in \mul\cL\big\}
\end{equation}
for some Hermitian matrix $L \colon \dom \cL \to \dom\cL$, as in
\cite[Thm.~1.5.1]{BehHasDeS_boundarytriples} or
\cite[Prop.~14.2]{Schmudgen_unboundedSAO}.

\begin{corollary}
  \label{cor:SALR}
  The Duistermaat index enjoys the following further properties.
  \begin{enumerate}
  \item \label{item:subtraction} For any Hermitian
    matrix $A$,
    \begin{equation}
      \label{eq:subtraction}
      \iD(\cL_1,\cL_2,\cL_3)
      = \iD(\cL_1-\cG_A,\cL_2-\cG_A,\cL_3-\cG_A).
    \end{equation}
  \item \label{item:inversion} For any Lagrangian planes $\cL_1$, $\cL_2$ and $\cL_3$,
    \begin{equation}
      \label{eq:inversion}
      \iD(\cL_1,\cL_2,\cL_3) = \iD(\cL_3^{-1},\cL_2^{-1},\cL_1^{-1}).
    \end{equation}
  \item \label{item:berndt-luger} For any Hermitian $A$ and Lagrangian
    $\cL$, represented as in \eqref{eq:gen_Lagr},
  \begin{align}
    \label{eq:berndt-luger}
    &\iD(\cG_A, \cL, \Ver) = n_-\big(L - A_{\dom\cL}\big), \\
    \label{eq:luger-berndt}
    &\iD(\cL, \cG_A, \Ver) = n_-\big(A_{\dom\cL} - L\big)
      + \dim \cL \cap \cG_\infty,
  \end{align}
  where $A_{\dom\cL}$ denotes the compression of $A$ to $\dom\cL$, namely
  \begin{equation}
    \label{eq:compression_def}
    A_{\dom\cL} := P A|_{\dom\cL},
  \end{equation}
  where $P$ is the orthogonal projector in $\C^n$ onto $\dom\cL$.
  \end{enumerate}
\end{corollary}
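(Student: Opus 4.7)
For the subtraction identity \eqref{eq:subtraction}, my plan is to exhibit an explicit symplectic map implementing the operation. The shear $\mathsf{S}\colon(u,v)\mapsto(u,v-Au)$ preserves the symplectic form precisely because $A$ is Hermitian (a one-line check using $\langle u_1,Au_2\rangle=\langle Au_1,u_2\rangle$), and directly from \eqref{eq:difference_def} one verifies $\mathsf{S}\cL_i=\cL_i-\cG_A$. The identity then follows from the symplectic invariance axiom of \Cref{def:Dui_index}. For the inversion identity \eqref{eq:inversion}, the swap $\mathsf{A}\colon(u,v)\mapsto(v,u)$ satisfies $\omega(\mathsf{A}x,\mathsf{A}y)=-\omega(x,y)$, so it is anti-symplectic, and manifestly $\mathsf{A}\cL_i=\cL_i^{-1}$; the claim is then exactly \eqref{eq:antisymplectic}.

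For \eqref{eq:berndt-luger} I would first reduce to the case where the first argument is $\Hor$ by applying the subtraction identity just proved: since $\Ver-\cG_A=\Ver$ (because any $(0,v)\in\Ver$ pairs only with $(0,0)\in\cG_A$), we obtain $\iD(\cG_A,\cL,\Ver)=\iD(\Hor,\cL-\cG_A,\Ver)$. A short direct computation from \eqref{eq:difference_def} together with the representation \eqref{eq:gen_Lagr} shows that $\cL-\cG_A$ has the same $\dom$ and $\mul$ as $\cL$, with $L$ replaced by $L-A_{\dom\cL}$. I would then invoke \eqref{eq:Grahams_vertical}, which for $\cL_1=\Hor$ (so $R_1^\epsilon\equiv0$) reduces everything to computing $n_-(R_{\cL-\cG_A}^\epsilon)$ for small $\epsilon>0$.

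The main step is thus the Robin map computation, and is where I expect the only real subtlety to lie. Decomposing $\C^n=\dom\cL\oplus\mul\cL$ and using a block-diagonal frame adapted to this splitting (with the $\dom\cL$ block looking like the frame of the graph of $L-A_{\dom\cL}$ and the $\mul\cL$ block looking like the frame of $\Ver$), a direct calculation yields that $R_{\cL-\cG_A}^\epsilon$ is block-diagonal, with upper block $(L-A_{\dom\cL})(I+\epsilon(L-A_{\dom\cL}))^{-1}$ and lower block $\epsilon^{-1}I$. For $\epsilon>0$ small the lower block is positive definite while the upper block has the same negative index as $L-A_{\dom\cL}$, giving the desired $n_-(L-A_{\dom\cL})$. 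The delicate point here is choosing the right frame for a non-graph Lagrangian and verifying that the $\mul$ part contributes nothing to $n_-$.

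Finally, for \eqref{eq:luger-berndt} my plan is to invoke the permutation formula \eqref{eq:swap12}, which gives $\iD(\cL,\cG_A,\Ver)=n-\dim(\cL\cap\cG_A)-\iD(\cG_A,\cL,\Ver)$. A brief calculation from \eqref{eq:gen_Lagr} gives $\dim(\cL\cap\cG_A)=\dim\ker(L-A_{\dom\cL})$. Combining this with $\dim(\cL\cap\Ver)=\dim\mul\cL=n-\dim\dom\cL$ and the inertia identity $\dim\dom\cL=n_-(L-A_{\dom\cL})+n_+(L-A_{\dom\cL})+n_0(L-A_{\dom\cL})$ to convert $n_+(L-A_{\dom\cL})$ into $n_-(A_{\dom\cL}-L)$ yields \eqref{eq:luger-berndt}.
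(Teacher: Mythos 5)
Your proofs of \eqref{eq:subtraction} and \eqref{eq:inversion} are correct and coincide with the paper's: your shear $(u,v)\mapsto(u,v-Au)$ is just the inverse of the paper's matrix $\mathsf{S}$ in \eqref{eq:Sagain}, and the swap map combined with \eqref{eq:antisymplectic} is exactly the paper's argument for inversion. For part (\ref{item:berndt-luger}) your route is correct but genuinely different. The paper also begins by subtracting $\cG_A$ and computing, as in \eqref{eq:LminusA}, that $\cL-\cG_A$ has the same $\dom$ and $\mul$ with operator part $L-A_{\dom\cL}$; but it then splits the symplectic space as $\big(\dom\cL\oplus\dom\cL\big)\oplus\big(\mul\cL\oplus\mul\cL\big)$, writes $\cL=\cG_L\oplus\Ver$, and evaluates \emph{both} \eqref{eq:berndt-luger} and \eqref{eq:luger-berndt} by symplectic additivity \eqref{eq:additivity} together with the special values \eqref{eq:spec_values1} and normalization --- no frames or Robin maps appear. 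You instead compute the $\epsilon$-Robin map of $\cL-\cG_A$ from a block frame adapted to $\C^n=\dom\cL\oplus\mul\cL$ and invoke \eqref{eq:Grahams_vertical}; your computation checks out, since $X+\epsilon Y$ is block diagonal with blocks $I+\epsilon(L-A_{\dom\cL})$ and $\epsilon I$, so $R^\epsilon$ has blocks $(L-A_{\dom\cL})\big(I+\epsilon(L-A_{\dom\cL})\big)^{-1}$ and $\epsilon^{-1}I$, whose inertia for $0<\epsilon\ll1$ gives $n_-(L-A_{\dom\cL})$. You then get \eqref{eq:luger-berndt} from \eqref{eq:berndt-luger} via the permutation identity \eqref{eq:swap12} and the (correct) observation $\dim(\cL\cap\cG_A)=\dim\ker(L-A_{\dom\cL})$, the extra $\dim\cL\cap\Ver$ term emerging from the inertia count on $\dom\cL$. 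The paper's additivity argument buys brevity and avoids frame bookkeeping, treating the two formulas symmetrically; your argument stays entirely within the Robin-map calculus of \Cref{prop:Dui_via_R}, and deriving \eqref{eq:luger-berndt} from \eqref{eq:swap12} is a nice shortcut that makes the origin of the asymmetric $\dim\cL\cap\Ver$ term transparent.
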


\begin{proof}
  Part (\ref{item:subtraction}) follows from symplectic invariance with
  \begin{equation}
    \label{eq:Sagain}
    \mathsf{S} : =
    \begin{pmatrix}
      I & 0\\
      A & I
    \end{pmatrix},
  \end{equation}
  which maps $\cL - \cG_A \mapsto \cL$.

  For Part (\ref{item:inversion}) we note that the mapping $(x,y) \mapsto (y,x)$ on
  $\C^n\oplus\C^n$, used in the definition of $\cL^{-1}$, is
  anti-symplectic.

  \newcommand{\cGLp}{{\cG_L'}}
  \newcommand{\Verp}{{\cG_\infty'}}
  \newcommand{\Verpp}{{\cG_\infty''}}
  \newcommand{\Horp}{{\cG_0'}}
  \newcommand{\Horpp}{{\cG_0''}}
  
  For Part (\ref{item:berndt-luger}) we use \eqref{eq:subtraction} to
  get
  \begin{equation}
    \label{eq:blb}
    \iD(\cG_A, \cL, \Ver) = \iD(\cG_0, \cL-\cG_A, \Ver),
    \qquad
    \iD(\cL, \cG_A, \Ver) = \iD(\cL-\cG_A, \cG_0, \Ver).
  \end{equation}
  We observe that
  \begin{align}
    \label{eq:LminusA}
    \cL - \cG_A
    &= \big\{(x, Lx-Ax+y) \colon x\in\dom\cL,\ y\in\mul\cL \big\}
    \\ \nonumber
    &= \big\{(x, (L-PA)x+y') \colon x\in\dom\cL,\ y'\in\mul\cL \big\}
    \\ \nonumber
    &= \big\{(x, (L-A_{\dom\cL})x+y') \colon x\in\dom\cL,\ y'\in\mul\cL \big\}.
  \end{align}
  This shows that it is enough to establish the $A=0$ versions of 
  \eqref{eq:berndt-luger} and \eqref{eq:luger-berndt}, namely
  \begin{equation}
    \label{eq:BL0}
    \iD(\Hor, \cL, \Ver) = n_-(L), \qquad
    \iD(\cL, \Hor, \Ver) = n_-(- L) + \dim \cL \cap \cG_\infty.
  \end{equation}
  We now partition the symplectic space as
  \begin{equation}
    \label{eq:space_partition}
    \C^n\oplus\C^n = \big(\dom\cL\oplus \dom\cL\big)
    \oplus \big(\mul\cL\oplus \mul\cL\big) =: V' \oplus V''.
  \end{equation}
  It is straightforward to check that
  the spaces $V'$ and $V''$ are symplectic\footnote{The restriction of $\omega$ to $V'$ is the 
  canonical symplectic form induced by the inner product on $\dom\cL$ (cf. \eqref{eq:sympl_form}), and likewise for the restriction of $\omega$ to $V''$.} with the forms
  $\omega' := \omega|_{V'}$ and $\omega'' := \omega|_{V''}$, and that
  $\omega = \omega'\oplus\omega''$.  With respect to this partition,
  we have $\Hor = \Horp \oplus \Horpp$, $\cL = \cGLp \oplus \Verpp$
  and $\Ver = \Verp\oplus\Verpp$.  We then use symplectic additivity
  \eqref{eq:additivity}, as well as the special values
  \eqref{eq:spec_values1} and normalization \eqref{eq:normalization},
  to compute
  \begin{align}
    \label{eq:split_off_infinity}
    \iD(\Hor, \cL, \Ver)
    &= \iD_{V'}(\Horp, \cGLp, \Verp)
      + \iD_{V''}(\Horpp, \Verpp, \Verpp)
    \\ \nonumber
    &= \iD_{V'}(\Horp, \cGLp, \Verp)  = n_-(L).
  \end{align}
  Similarly, using the modified normalization $\iD(\cG_A, \Hor, \Ver)
  = \iD(\Hor, \cG_{-A}, \Ver) = n_-(-A)$, obtained from
  \eqref{eq:normalization} and  \eqref{eq:subtraction}, we obtain
  \begin{align}
    \label{eq:split_off_again}
    \iD(\cL, \Hor, \Ver)
    &= \iD_{V'}(\cGLp, \Horp, \Verp)
      + \iD_{V''}(\Verpp, \Horpp, \Verpp)
    \\ \nonumber
    &= \iD_{V'}(\cGLp, \Horp, \Verp) + \frac12\dim V''
      = n_-(-L) + \dim \cL \cap \Ver,
  \end{align}
  which completes the proof.
\end{proof}

\section{Relationship with other indices}
\label{sec:Maslov}

\subsection{The Kashiwara index}

Another commonly used index of a triple of Lagrangian planes arose
independently in the work of H\"ormander, Kashiwara and Wall; see, for
instance, \cite{Wal_im69}, \cite[Eq.~(3.3.15)]{Hor_am71},
\cite[Def.~1.5.1]{LionVergne_WeilMaslov},
\cite[Sec.~8]{CapLeeMil_cpam94}, \cite[Sec 4.2]{Fur_jgp04}.  The
H\"ormander--Kashiwara--Wall index $s(\cL_1, \cL_2, \cL_3)$ is
defined\footnote{Only the reference \cite{Fur_jgp04}
  considered complex spaces; the signs in our definition of the form
  $\Qform$ are different.} as the signature of the form $\Qform$ from
Theorem~\ref{thm:existence_Duistermaat},
\begin{equation}
  \label{eq:Kashiwara_def}
  s(\cL_1, \cL_2, \cL_3) := \Sig(\Qform) = n_+(\Qform) - n_-(\Qform).
\end{equation}
The advantage of dealing with $\Sig(A)$ is its symmetry property
$\Sig(-A) = -\Sig(A)$, in contrast to \eqref{eq:symmetry_nminus}.
However, our motivation comes,
ultimately, from operators that are bounded only from below; in this
case $n_-$ may be well-defined while $n_+$ will not be.

The two indices are connected through the following formulas.

\begin{lemma}
  \label{lem:Kashi_Dui}
  For any Lagrangian planes $\cL_1$, $\cL_2$ and $\cL_3$,
  \begin{equation}
    \label{eq:Kashiwara_from_Dui}
    s(\cL_1, \cL_2, \cL_3) = \iD(\cL_2, \cL_1, \cL_3) - \iD(\cL_3, \cL_1, \cL_2)
  \end{equation}
  or, equivalently,
  \begin{equation}
    \label{eq:Dui_via_Kashiwara}
    \iD(\cL_1, \cL_2, \cL_3) 
    = \frac12 \Big(n - \dim\cL_1\cap\cL_2 + \dim\cL_1\cap\cL_3 - \dim\cL_2\cap\cL_3
      - s(\cL_1, \cL_2, \cL_3) \Big).
  \end{equation}
\end{lemma}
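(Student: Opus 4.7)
The plan is to prove \eqref{eq:Dui_via_Kashiwara} first by unwinding the definition of $s$, and then obtain \eqref{eq:Kashiwara_from_Dui} as a consequence by exploiting the symmetries of $\Qform$ under permutation of its arguments.

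For \eqref{eq:Dui_via_Kashiwara}, I would start from $s(\cL_1, \cL_2, \cL_3) = n_+(\Qform) - n_-(\Qform)$. Since $\Qform$ is a Hermitian form on a space of complex dimension $3n$, we have $n_+(\Qform) + n_-(\Qform) + n_0(\Qform) = 3n$, so
\begin{equation*}
  s(\cL_1, \cL_2, \cL_3) = 3n - 2\,n_-(\Qform) - n_0(\Qform).
\end{equation*}
Substitute $n_-(\Qform) = \iD(\cL_1, \cL_2, \cL_3) + n - \dim \cL_1 \cap \cL_3$ from \eqref{eq:Dui_via_Q} and the expression $n_0(\Qform) = \dim \cL_1 \cap \cL_2 + \dim \cL_1 \cap \cL_3 + \dim \cL_2 \cap \cL_3$ from \eqref{eq:n0Q}. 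Collecting terms produces \eqref{eq:Dui_via_Kashiwara} after solving for $\iD$.

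To obtain \eqref{eq:Kashiwara_from_Dui}, I would first record two symmetries of $s$ that follow by direct inspection of \eqref{eq:bigQ}: the form $\Qform$ is unchanged by a cyclic relabeling of its three arguments (it is a cyclic sum in $\omega$), and a transposition of two arguments reverses the sign of every term. Passing to the signature yields
\begin{equation*}
  s(\cL_3, \cL_1, \cL_2) = s(\cL_1, \cL_2, \cL_3),
  \qquad
  s(\cL_2, \cL_1, \cL_3) = -\,s(\cL_1, \cL_2, \cL_3).
\end{equation*}
Now apply \eqref{eq:Dui_via_Kashiwara} to the triples $(\cL_2, \cL_1, \cL_3)$ and $(\cL_3, \cL_1, \cL_2)$ and subtract: the six intersection-dimension terms cancel pairwise, while the two signature contributions combine to $2\,s(\cL_1, \cL_2, \cL_3)$ by the identities above. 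Dividing by $2$ gives \eqref{eq:Kashiwara_from_Dui}.

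The only point requiring care is the symmetry analysis of $\Qform$: a permutation of the three arguments corresponds to an explicit linear isomorphism between direct sums $\cL_{\sigma(1)} \oplus \cL_{\sigma(2)} \oplus \cL_{\sigma(3)}$ and $\cL_1 \oplus \cL_2 \oplus \cL_3$, and one must verify that under this identification $\Qform$ is mapped to itself (for cyclic $\sigma$) or to its negative (for a transposition). Since any such isomorphism preserves signature and nullity, the stated symmetries of $s$ follow, and the rest of the argument is bookkeeping.
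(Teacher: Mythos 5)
Your proof is correct, but it runs in the opposite direction from the paper's. The paper proves \eqref{eq:Kashiwara_from_Dui} first: it applies the $\epsilon$-Robin map formula \eqref{eq:Dui_index_gen} to $\iD(\cL_2,\cL_1,\cL_3)$ and $\iD(\cL_3,\cL_1,\cL_2)$, uses $n_-(A)=n_+(-A)$ to recognize the difference as a sum of signatures of the matrices $R^\epsilon_j-R^\epsilon_k$, and identifies that sum with $\Sig(\Qform)=s(\cL_1,\cL_2,\cL_3)$ via the congruence \eqref{eq:Qmatrix_similar}; it then deduces \eqref{eq:Dui_via_Kashiwara} from the permutation identities of \Cref{thm:properties}(\ref{item:permutation}). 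You instead prove \eqref{eq:Dui_via_Kashiwara} directly by the dimension count $n_+(\Qform)+n_-(\Qform)+n_0(\Qform)=3n$ combined with \eqref{eq:Dui_via_Q} and \eqref{eq:n0Q} (both legitimately available, being \Cref{thm:existence_Duistermaat} and \Cref{rem:n0Q}), and then recover \eqref{eq:Kashiwara_from_Dui} from the cyclic invariance and transposition antisymmetry of $\Qform$, which you correctly flag must be checked through the coordinate-permuting isomorphism of $\cL_{\sigma(1)}\oplus\cL_{\sigma(2)}\oplus\cL_{\sigma(3)}$; your cancellation bookkeeping ($-s(\cL_2,\cL_1,\cL_3)+s(\cL_3,\cL_1,\cL_2)=2s(\cL_1,\cL_2,\cL_3)$) checks out. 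What your route buys: it bypasses the Robin-map formula \eqref{eq:Dui_index_gen} and the $\iD$-permutation identities entirely, replacing them with the classical symmetries of the H\"ormander--Kashiwara--Wall form, so the lemma follows from the definition \eqref{eq:Dui_via_Q} with minimal machinery. What the paper's route buys: it stays entirely within identities for $\iD$ already catalogued (and within the signature additivity built into \eqref{eq:Qmatrix_similar}), so no separate analysis of how $\Qform$ transforms under permutations is needed. Note that neither route is fully independent of \Cref{prop:Dui_via_R}, since \eqref{eq:n0Q} is itself derived from \eqref{eq:Qmatrix_similar}.
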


\begin{proof}
  Applying \eqref{eq:Dui_index_gen} to $\iD(\cL_2, \cL_1, \cL_3)$ and
  $\iD(\cL_3, \cL_1, \cL_2)$ and recalling that $n_-(A) = n_+(-A)$, we
  get
  \begin{equation}
    \label{eq:sigReps}
    \iD(\cL_2, \cL_1, \cL_3) - \iD(\cL_3, \cL_1, \cL_2)
    = \Sig(\Rterm21) + \Sig(\Rterm31) + \Sig(\Rterm32),
  \end{equation}
  which is equal to $s(\cL_1, \cL_2, \cL_3)$ by
  \eqref{eq:Kashiwara_def} and \eqref{eq:Qmatrix_similar}.  To obtain
  \eqref{eq:Dui_via_Kashiwara} we use
  Theorem~\ref{thm:properties}(\ref{item:permutation}).
\end{proof}

It follows that $\iD(\cL_1, \cL_2, \cL_3)$ is the only non-trivial
invariant of a triple of Lagrangian planes, in the following sense.

\begin{proposition}
  \label{prop:invariant}
  For any two triples $(\cL_1, \cL_2, \cL_3)$ and
  $(\cM_1, \cM_2, \cM_3)$ of Lagrangian planes, there is a linear
  symplectic transformation $\mathsf{S}$ satisfying
  $\mathsf{S} \cL_j = \cM_j$ for all $j\in\{1,2,3\}$ if and only if
  \begin{align}
    \label{eq:all_invariants3}
    &\dim \cL_j \cap \cL_k = \dim \cM_j \cap \cM_k
      \quad\text{for all }j,k\in\{1,2,3\},\\
    &\dim \cL_1 \cap \cL_2 \cap \cL_3 = \dim \cM_1 \cap \cM_2 \cap
      \cM_3,\\
    \label{eq:nontriv_invariant}
    &\iD(\cL_1, \cL_2, \cL_3) = \iD(\cM_1, \cM_2,
      \cM_3).
  \end{align}
\end{proposition}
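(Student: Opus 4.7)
The forward implication is immediate from \Cref{def:Dui_index}(\ref{item:sympl_inv}) together with the fact that a linear bijection preserves dimensions of subspace intersections. The content is in the converse.

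For the converse, the plan is to bring both triples into a common symplectic normal form, with the listed invariants determining the normalizing data. I would proceed in three stages.

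\emph{Stage~1 (normalize the first two planes).} The symplectic group acts transitively on $\Lambda(n)$, so after conjugating I may assume $\cL_1 = \Hor$. The stabilizer of $\Hor$ contains the block-diagonal maps $(x,y)\mapsto (Cx,(C^{-1})^*y)$ for invertible $C$, together with the shears $(x,y)\mapsto(x,y+Ax)$ for Hermitian $A$; using these I would bring $\cL_2$ into a canonical form depending only on $k_{12}:=\dim\cL_1\cap\cL_2$, for instance $\cL_2 = (\cL_1\cap\cL_2)\oplus \Ver'$ in a decomposition of $\C^n\oplus\C^n$ adapted to $\cL_1\cap\cL_2$. Since $k_{12}$ agrees for the two triples by \eqref{eq:all_invariants3}, this simultaneously normalizes $(\cL_1,\cL_2)$ and $(\cM_1,\cM_2)$.

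\emph{Stage~2 (split off coincidences).} Using symplectic additivity \Cref{thm:properties}(\ref{item:additivity}), I would decompose $\C^n\oplus\C^n$ as a symplectic direct sum of subspaces supporting the various pairwise and triple intersections. Because the dimension equalities \eqref{eq:all_invariants3} hold for both triples, this decomposition has the same structure on each side; the contribution of each summand to $\iD$ is controlled by the special values \eqref{eq:spec_values1} and the permutation identities of \Cref{thm:properties}(\ref{item:permutation}). After collecting these known contributions, the problem reduces to matching a pairwise transversal triple in a smaller symplectic space whose Duistermaat index is a known function of the original invariants.

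\emph{Stage~3 (the transversal case).} For a pairwise transversal triple I may further symplectically normalize so that $\cL_1=\Hor$ and $\cL_3=\Ver$; then $\cL_2$ must be the graph $\cG_A$ of an invertible Hermitian matrix $A$, and the normalization axiom \eqref{eq:normalization} gives $\iD(\cL_1,\cL_2,\cL_3)=n_-(A)$. The remaining stabilizer of the pair $(\Hor,\Ver)$ consists of the block-diagonal maps $(x,y)\mapsto(Cx,(C^{-1})^*y)$, which act on $A$ by congruence $A\mapsto (C^{-1})^*AC^{-1}$. By Sylvester's law of inertia, two invertible Hermitian matrices with equal negative index are congruent, so the equality $n_-(A)=n_-(A')$ forced by \eqref{eq:nontriv_invariant} supplies the required $C$, and hence the desired symplectic transformation.

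The main obstacle is Stage~2: one has to arrange the symplectic splitting coherently with the various isotropic intersections and carefully track each summand's contribution to $\iD$. A cleaner alternative that bypasses this bookkeeping is to invoke the classical H\"ormander--Kashiwara--Wall classification of triples of Lagrangian planes (see e.g.\ \cite{LionVergne_WeilMaslov,CapLeeMil_cpam94}), which states that the signature $s(\cL_1,\cL_2,\cL_3)$ together with the pairwise and triple intersection dimensions is a complete symplectic invariant; one then translates this into the Duistermaat index via \Cref{lem:Kashi_Dui}.
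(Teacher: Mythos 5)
Your closing ``cleaner alternative'' is precisely the paper's proof: the paper cites the classical fact that the H\"ormander--Kashiwara--Wall signature $s(\cL_1,\cL_2,\cL_3)$ together with the pairwise and triple intersection dimensions is a complete symplectic invariant of a Lagrangian triple (e.g.\ \cite[Thm.~8.4(d)]{Meinrenken_SymplecticGeometry} or \cite[Prop.~4.4]{AgrGam_SymplMeth}), and then notes via \Cref{lem:Kashi_Dui}, in particular \eqref{eq:Dui_via_Kashiwara}, that $s$ and $\iD$ determine one another once the dimensions in \eqref{eq:all_invariants3} are fixed, so replacing the signature condition by \eqref{eq:nontriv_invariant} gives an equivalent statement. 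If you take that route, your argument is correct and essentially identical to the paper's (the forward implication being, as you say, immediate from symplectic invariance and preservation of intersection dimensions).

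The three-stage direct argument, as written, is not complete, and the gap is exactly where you flag it: Stage~2. The existence of a symplectic direct-sum decomposition that is \emph{simultaneously} adapted to all three Lagrangians --- so that each $\cL_j$ splits as the direct sum of its traces on the summands, with summands isolating $\cL_1\cap\cL_2\cap\cL_3$, the pairwise intersections modulo it, and a pairwise transversal remainder --- is not delivered by the pair normal form of Stage~1; a symplectic complement chosen for $\cL_1\cap\cL_2$ need not be compatible with $\cL_3$, and arranging compatibility with the same combinatorial structure on both sides is precisely the nontrivial content of the classification theorem you eventually invoke. Once such a decomposition exists, the bookkeeping you describe does work (symplectic additivity \eqref{eq:additivity}, the special values \eqref{eq:spec_values1}, and the permutation identities reduce everything to the transversal block, whose index is then pinned down by \eqref{eq:nontriv_invariant}), and Stage~3 is sound: the stabilizer of the pair $(\Hor,\Ver)$ acts on graphs by congruence $A\mapsto (C^{-1})^*AC^{-1}$, and Sylvester's law matches two invertible Hermitian matrices with equal $n_-$. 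But without an actual construction of the adapted splitting, Stages~1--3 amount to a sketch of a reproof of the H\"ormander--Kashiwara--Wall classification rather than a proof; the short and correct version of your argument is the final paragraph.
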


\begin{proof}
  Proposition~\ref{prop:invariant} is well known when
  \eqref{eq:nontriv_invariant} is replaced by the condition
  $s(\cL_1, \cL_2, \cL_3) = s(\cM_1, \cM_2, \cM_3)$, see
  \cite[Thm.~8.4(d)]{Meinrenken_SymplecticGeometry} or
  \cite[Prop.~4.4]{AgrGam_SymplMeth}.  By Lemma~\ref{lem:Kashi_Dui} the indices $s$ and $\iD$
  are related by an invertible transformation involving the
  invariants in \eqref{eq:all_invariants3} and so, assuming \eqref{eq:all_invariants3} holds, $s(\cL_1, \cL_2, \cL_3) = s(\cM_1, \cM_2, \cM_3)$ if and only if $\iD(\cL_1, \cL_2, \cL_3) = \iD(\cM_1, \cM_2, \cM_3)$.
\end{proof}

\subsection{The Maslov index}\label{SS4.2}

Suppose $\cL \colon [0,1] \to \Lambda(n)$ is a differentiable
path of Lagrangian planes, given
by a frame $\big(X(t), Y(t)\big)$. For
$t_0 \in [0,1]$ and $v \in \cL(t_0)$ we define 
the\footnote{The equivalence of \eqref{eq:crossing_via_frame} to other common formulas in the literature is proved in \cite[Thm.~2.1]{BerCoxLatSuk_prep23}.}
 \emph{crossing form} $\form$ on $\cL(t_0)$ by 
\begin{equation}
  \label{eq:crossing_via_frame}
  \form(v,v) = \left<x,
    \big(X^*(t_0) Y'(t_0) - Y^*(t_0) X'(t_0)\big) x
  \right>_{\C^n},
  \quad\text{where }
  v =
    \big(X(t_0)x, 
    Y(t_0)x\big).
\end{equation}
We say that the path
$\cL(\cdot)$ is \emph{non-decreasing} if $\form$ is positive
semidefinite at all $t$. We say that it is \emph{regular}, with respect
to a given Lagrangian plane $\cM$, if it is continuously differentable
and the restricted quadratic form $\form|_{\cL(t_0)\cap\cM}$ is
non-degenerate whenever $\cL(t_0)\cap\cM$ is nontrivial. Such $t_0$
are referred to as \emph{crossings} or \emph{conjugate points}.

For a regular path $\cL(\cdot)$ the conjugate points are isolated, and its Maslov index
with respect to a Lagrangian reference plane $\cM$ is given by
\begin{equation}
  \label{eq:MaslovIndex_def}
  \Mas_{[0,1]}\!\big(\cL(t), \cM\big)
  := \sum_{t\in[0,1)}n_+(\form|_{\cL(t)\cap\cM})
  - \sum_{t\in(0,1]}n_-(\form|_{\cL(t)\cap\cM}).
\end{equation} 

\begin{remark}
There are two ways of viewing the formula \eqref{eq:MaslovIndex_def}, depending on 
how one defines the Maslov index. One approach is to define the Maslov index for 
any continuous path via the spectral flow of a certain family of unitary operators. For a regular 
path one then recovers \eqref{eq:MaslovIndex_def} as a special case; see \cite{BooFur_tjm98,Fur_jgp04} for details. 
Alternately, one can take \eqref{eq:MaslovIndex_def} as the \emph{definition} of the Maslov index for a regular path, 
then define the Maslov index of a continuous path by first deforming it to a regular path and then arguing that the 
resulting index is independent of the choice of deformation, as in \cite{RobSal_t93}.
\hfill$\Diamond$\end{remark}

\begin{lemma}
  \label{lem:minimal_path}
  For any Lagrangian planes $\cL_0$ and $\cL_1$, there exists a
  smooth, non-decreasing path $\cL \colon [0,1] \to \Lambda(n)$ between $\cL_0$ and $\cL_1$
  such that
  \begin{equation}
    \label{eq:minimal_path}
    \iD(\cL_0,\cL_1,\cM) = \Mas_{[0,1]}\!\big(\cL(t), \cM\big)
  \end{equation}
  for any Lagrangian plane $\cM$.
\end{lemma}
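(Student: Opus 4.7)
The plan is to construct the path explicitly in a canonical frame, then match its Maslov index to $\iD$ by combining \Cref{thm:extremal_Dui} with a direct crossing count.

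First I would reduce to canonical form. By the symplectic invariance of both $\iD$ (axiom~\ref{item:sympl_inv}) and the Maslov index, it suffices to prove the lemma after any simultaneous symplectic transformation of the triple $(\cL_0, \cL_1, \cM)$. I would first choose one mapping $\cL_0$ to $\Hor$, then write the transformed $\cL_1$ as $\cG_L \oplus \Ver$ relative to $\C^n = \dom\cL_1 \oplus \mul\cL_1$, cf.\ \eqref{eq:gen_Lagr}, and finally diagonalize the Hermitian operator $L$ via a block-unitary symplectic $\text{diag}(U^*, U^*)$ that leaves $\Hor$ fixed. After this reduction, $\cL_0 = \Hor$ and $\cL_1$ has the frame $\Lframe{\text{diag}(\cos\theta^\ast_j)}{\text{diag}(\sin\theta^\ast_j)}$, where the target angles $\theta^\ast_j \in [0,\pi]$ are determined by requiring the shortest non-decreasing rotation from $\Hor$ in each diagonal direction: $\theta^\ast_j = \arctan d_j$ if the $j$-th direction lies in $\dom\cL_1$ with non-negative eigenvalue $d_j$ of $L$, $\theta^\ast_j = \pi + \arctan d_j$ for $d_j < 0$, and $\theta^\ast_j = \pi/2$ when the $j$-th direction lies in $\mul\cL_1$.

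I would then set $\theta_j(t) := t \theta^\ast_j$ and $\cL(t) := \Ran\Lframe{\text{diag}(\cos\theta_j(t))}{\text{diag}(\sin\theta_j(t))}$. Smoothness and the endpoint conditions $\cL(0) = \Hor$, $\cL(1) = \cL_1$ are immediate. A direct computation of the crossing form~\eqref{eq:crossing_via_frame} yields $X^*Y' - Y^*X' = \text{diag}(\dot\theta_j(t)) \geq 0$, so the path is non-decreasing.

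The main obstacle is to verify $\Mas_{[0,1]}(\cL(\cdot), \cM) = \iD(\cL_0, \cL_1, \cM)$ for every~$\cM$. The lower bound $\Mas \geq \iD$ is furnished by~\Cref{thm:extremal_Dui}. For the matching upper bound I would first establish equality on the open dense set of~$\cM$ in generic position with respect to $\cL(\cdot)$ (so that the intersections $\cL(t) \cap \cM$ are nontrivial only at isolated, transverse crossings): such crossings are the zeros of $t \mapsto \det\bigl(Y_M^* X(t) - X_M^* Y(t)\bigr)$ for any frame $(X_M, Y_M)$ of $\cM$, each contributes positively to~\eqref{eq:MaslovIndex_def} because $\text{diag}(\dot\theta_j)$ is positive semidefinite on the whole of $\cL(t)$, and the total count can be matched against~\eqref{eq:Dui_index_gen} by a winding-number argument for the eigenvalues of $U_M^{-1}U(t)$ through $1$, where $U(t) = \text{diag}(e^{-2i\theta_j(t)})$ is the Cayley unitary parametrizing $\cL(t)$. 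The extension from generic to arbitrary $\cM$ then follows from upper semicontinuity of both sides in $\cM$ combined with the lower bound. The delicate point is this combinatorial matching at generic $\cM$ and the endpoint bookkeeping at $t = 0, 1$ required by the asymmetric convention in~\eqref{eq:MaslovIndex_def}.
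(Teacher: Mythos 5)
Your construction of the path itself is fine (a monotone rotation in a normal form adapted to the pair $(\cL_0,\cL_1)$ is close in spirit to the paper's choice $\cL(t)=\cG_{tQ}$ with $Q$ a projector), but the proof of the identity \eqref{eq:minimal_path} has two genuine gaps. First, the lower bound $\Mas_{[0,1]}(\cL(t),\cM)\ge\iD(\cL_0,\cL_1,\cM)$ cannot be ``furnished by \Cref{thm:extremal_Dui}'': in the paper that theorem is deduced from \Cref{thm:ZWZ}, whose proof in turn relies on the very lemma you are proving, so invoking it here is circular. An independent argument for this inequality is precisely the hard half of \Cref{thm:extremal_Dui}; if you had one, you would not need it anyway, because your strategy requires proving \emph{equality} for your specific path, which the paper does by a direct computation of both sides rather than by sandwiching.

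Second, the equality itself is only gestured at. The ``winding-number matching'' against \eqref{eq:Dui_index_gen} is not carried out, and the passage from generic to arbitrary $\cM$ via ``upper semicontinuity of both sides'' is unsubstantiated: both $\cM\mapsto\Mas_{[0,1]}(\cL(t),\cM)$ and $\cM\mapsto\iD(\cL_0,\cL_1,\cM)$ are integer-valued functions whose jumps are governed exactly by the Zhou--Wu--Zhu identity you do not yet have, and the semicontinuity directions you would actually need (lower for $\Mas$, upper for $\iD$, with careful endpoint bookkeeping in the asymmetric convention \eqref{eq:MaslovIndex_def}) are neither stated correctly nor proved. Moreover, the non-generic case is unavoidable, not removable by perturbing $\cM$: whenever $\cL_0\cap\cL_1\cap\cM\neq\{0\}$, your path has $\cL(t)\cap\cM$ containing a fixed direction for all $t$, so the crossings are not isolated and \eqref{eq:MaslovIndex_def} does not apply to $\cL(t)$ at all. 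The paper resolves exactly this degeneracy by the two-parameter shift $\cL(s,t)=\cG_{tQ-sI}$ (see \Cref{rem:sdef}) and then verifies \eqref{eq:minimal_path} by computing both sides in closed form: $\iD(\cG_0,\cG_Q,\cM)=n_-\big(M-Q_{\dom\cM}\big)-n_-(M)$ via the cocycle property and \eqref{eq:berndt-luger}, and the same value for the Maslov index via the crossing forms along the shifted path. Your proof needs either this kind of explicit two-sided computation or an honest regularization-plus-stability argument replacing the semicontinuity claim.
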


\begin{proof}
  Let $Q \colon \C^n \to \C^n$ be an arbitrary orthogonal projector
  with $\dim\ker Q = \dim \cL_0 \cap \cL_1$.  Since $\cG_0 \cap \cG_Q$
  and $\cL_0 \cap \cL_1$ have the same dimension, there exists a
  symplectic transformation $\mathsf{S}$ so that
  $\mathsf{S}\cL_0 = \cG_0$ and $\mathsf{S}\cL_1 = \cG_Q$ (see, for
  example, \cite[Exercise 8.1]{Meinrenken_SymplecticGeometry}).  Since
  both quantities in \eqref{eq:minimal_path} are symplectic
  invariants, without loss of generality we can assume that
  $\mathsf{S}$ is the identity. That is, it suffices to find a
  non-decreasing path $\cL(\cdot)$ with $\cL(0) = \cG_0$,
  $\cL(1) = \cG_Q$ and
  $\iD(\cG_0,\cG_Q,\cM) = \Mas\!\big(\cL(t), \cM\big)$ for any
  Lagrangian plane $\cM$. We claim that $\cL(t) = \cG_{tQ}$ has this
  property.

  Representing $\cM$ using a Hermitian
  matrix $M$ on $\dom\cM$, as in \eqref{eq:gen_Lagr}, and letting $Q_{\dom\cM}$ denote the compression of $Q$ to  $\dom\cM$ (see \eqref{eq:compression_def}), we use the cocycle property
  \eqref{eq:cocyle_identity} to obtain
  \begin{align}
    \label{eq:index_with0}
    \iD(\cG_0, \cG_Q, \cM)
    &= \iD(\cG_0, \cG_Q, \cG_\infty) - \iD(\cG_0, \cM, \cG_\infty)
    + \iD(\cG_Q, \cM, \cG_\infty) \\
    &= n_-(Q) 
    - n_-(M) + n_-\big(M-Q_{\dom\cM}\big),
  \end{align}
  where we used \eqref{eq:normalization} for the first term on the
  right-hand side and \eqref{eq:berndt-luger} for the remaining
  terms. Since $n_-(Q) = 0$, the proof of \eqref{eq:minimal_path} will
  be complete once we show that
    \begin{equation}
    \label{eq:Maslov_result}
    \Mas_{[0,1]}\!\big(\cL(t), \cM\big) = n_-\big(M-Q_{\dom\cM}\big) - n_-(M).
  \end{equation}

  To calculate this Maslov index we define the two-parameter family
  $\cL(s,t) = \cG_{tQ - s I}$. It is clear that $\cL(t) = \cL(0,t)$ is
  homotopic, with fixed endpoints, to the concatenation of
  $\cL(s,0)|_{s\in[0,s_*]}$, $\cL(s_*,t)|_{t\in[0,1]}$ and
  $\cL(s_* - s,1)|_{s\in[0,s_*]}$, so the index of $\cL(t)$ is the sum of
  these three indices.  We choose the parameter $s_* > 0$ to be small
  enough to guarantee that neither $M$ nor $M-Q_{\dom\cM}$ has any
  spectrum in the interval $[-s_*,0)$.

  Recalling the representation of $\cM$ in \eqref{eq:gen_Lagr} and
  computing as in \eqref{eq:LminusA}, we
  observe that
  \begin{equation}
    \label{eq:intersection_Mpath2}
    \cL(s,t) \cap \cM = \big\{ (x, Mx) : x \in \dom\cM
    \text{ and } Mx = t Qx - sx \big\}.
  \end{equation}
  Our condition on $s_*$ means that this intersection is trivial for
  $t=0$ and $s \in (0,s_*]$, as well as for $t=1$ and $s \in (0,s_*]$.

  The path $\cL(s,0)|_{[0,s_*]}$ may still have a conjugate point at
  $s = 0$.  However, the $s$-crossing form, computed
  via~\eqref{eq:crossing_via_frame}, is $-\left<x, x\right>_{\C^n}$,
  which is negative definite and hence generates no contribution to
  \eqref{eq:MaslovIndex_def} at the initial point of a path.  We
  conclude that $\Mas_{[0,s_*]}\!\big(\cL(s,0), \cM\big) = 0$ and, by
  similar reasoning,
  $\Mas_{[0,s_*]}\!\big(\cL(s_* - s,1), \cM\big) = 0$.

  We now evaluate the remaining term,
  $\Mas_{[0,1]}\!\big(\cL(s_*,t), \cM\big)$.  The $t$-crossing form
  evaluates to $\left<x, Qx\right>_{\C^n}$, which we claim to be
  strictly positive on $\cL(s_*,t) \cap \cM$ whenever this intersection is
  non-trivial.  Indeed, the form is obviously non-negative and if
  $Qx = 0$ for some nonzero $x \in \dom\cM$, then
  \eqref{eq:intersection_Mpath2} implies $Mx=-s_*x$, which is excluded
  by our condition that $M$ has no spectrum in $[-s_*,0)$, thus proving the claim. The same argument shows that any eigenvalue curve $\lambda(t)$ for the matrix $M - t Q_{\dom\cM} + s_*I_{\dom\cM}$ has $\lambda'(t) \leq 0$ in general and $\lambda'(t_0) < 0$ at any time $t_0$ for which $\lambda(t_0) = 0$.

  It follows that each $n_+$ term in \eqref{eq:MaslovIndex_def} equals
  the dimension of the space $\cL(s_*,t) \cap \cM$, so
  \begin{equation}
    \label{eq:Maslov_dimsum}
    \Mas_{[0,1]}\!\big(\cL(s_*,t), \cM\big)
    = \sum_{t \in [0,1)} \dim\ker \big(M - t Q_{\dom\cM} + s_*I_{\dom\cM}\big).
  \end{equation}
  Since the eigenvalues of $M - t Q_{\dom\cM} + s_*I_{\dom\cM}$ are strictly decreasing in $t$ when they equal zero,
  the sum in \eqref{eq:Maslov_dimsum} is equal to the number of
  eigenvalues of $M - t Q_{\dom\cM} + s_* I_{\dom\cM}$ that pass through zero (and hence become negative) at some $t \in [0,1)$,
  thus
  \begin{equation}
    \Mas_{[0,1]}\!\big(\cL(s_*,t), \cM\big)
    = n_-\big(M-Q_{\dom\cM} + s_* I\big) - n_-(M + s_* I_{\dom\cM}).
  \end{equation}
  Finally, we observe that our conditions on the smallness of $s_*$ imply
  \begin{equation}
    \label{eq:s*_def}
    n_-\big(M-Q_{\dom\cM} + s_* I_{\dom\cM}\big) = n_-\big(M-Q_{\dom\cM}\big),
    \qquad
    n_-(M + s_* I) = n_-(M),
  \end{equation}
  establishing \eqref{eq:Maslov_result}.
\end{proof}

\begin{remark}
\label{rem:sdef}
We note that $\ker M \cap \ker Q\not=\{0\}$ implies $\cL(0,t) \cap \cM\not=\{0\}$ for all $t$, because $(\ker M \cap \ker Q)\oplus\{0\}\subset \cL(0,t) \cap \cM$. This means the crossings are not isolated and the crossing form is degenerate, so \eqref{eq:MaslovIndex_def} does not apply. This is the reason for the introduction of $s$ in the proof above. A similar homotopy argument was used in \cite{RobSal_t93} to prove the ``zero axiom" of the Maslov index, which says that a path $\cL(t)$ for which $\dim(\cL(t) \cap \cM)$ is constant has $\Mas\!\big(\cL(t), \cM\big) = 0$.
\hfill$\Diamond$\end{remark}

As a corollary of Lemma~\ref{lem:minimal_path} we obtain the
Zhou--Wu--Zhu identity \cite[Thm.~1.1]{ZhoWuZhu_fmc18}, which has
numerous applications in the analysis of differential equations
\cite{BarOffPorWu_mz21,How_jmaa21,ElySepSim_f23,BerCoxLatSuk_prep23,LiuZhaZha_dcds24}.

\begin{theorem}[Zhou--Wu--Zhu identity]
  \label{thm:ZWZ}
  For any path $\cL \colon [0,1] \to \Lambda(n)$ and any Lagrangian
  planes $\cM_1$ and $\cM_2$,
  \begin{align}
    \label{eq:Hor_ZWZ0}
    \Mas_{[0,1]}\!\big(\cL(t), \cM_1\big) -
    \Mas_{[0,1]}\!\big(\cL(t), \cM_2\big)
    &= \iD\!\big(\cL(0), \cL(1), \cM_1 \big)
      - \iD\!\big(\cL(0), \cL(1), \cM_2 \big)
    \\ \label{eq:Hor_ZWZ1}
    &= \iD\!\big(\cL(0), \cM_1, \cM_2 \big)
      - \iD\!\big(\cL(1), \cM_1, \cM_2 \big).
  \end{align} 
\end{theorem}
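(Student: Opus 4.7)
The plan is to establish the two equalities \eqref{eq:Hor_ZWZ0} and \eqref{eq:Hor_ZWZ1} separately. For \eqref{eq:Hor_ZWZ0}, the strategy is to combine the minimal-path construction of \Cref{lem:minimal_path} with a path-independence argument. For \eqref{eq:Hor_ZWZ1}, the cocycle property \eqref{eq:cocyle_identity} suffices by itself.

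For the first equality, I would first claim that the combination $\Mas_{[0,1]}(\cL(t), \cM_1) - \Mas_{[0,1]}(\cL(t), \cM_2)$ depends on the path $\cL$ only through its endpoints. Given any other path $\cL^* \colon [0,1] \to \Lambda(n)$ with $\cL^*(0) = \cL(0)$ and $\cL^*(1) = \cL(1)$, the concatenation $\gamma := \cL \ast (\cL^*)^{-1}$ is a loop based at $\cL(0)$. A standard fact about the Maslov index is that $\Mas(\gamma, \cM)$ is a homotopy invariant of $\gamma$ taking values in $\pi_1(\Lambda(n)) \cong \Z$, and in particular is independent of the reference Lagrangian plane $\cM$. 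Combined with additivity of the Maslov index under path concatenation, this yields
\[
\Mas(\cL, \cM_1) - \Mas(\cL^*, \cM_1) = \Mas(\gamma, \cM_1) = \Mas(\gamma, \cM_2) = \Mas(\cL, \cM_2) - \Mas(\cL^*, \cM_2),
\]
so the difference $\Mas(\cL, \cM_1) - \Mas(\cL, \cM_2)$ is the same for $\cL$ and for $\cL^*$. Applying this with $\cL^*$ the non-decreasing minimal path supplied by \Cref{lem:minimal_path}, for which $\Mas(\cL^*, \cM_i) = \iD(\cL(0), \cL(1), \cM_i)$ by construction, gives \eqref{eq:Hor_ZWZ0}.

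For the second equality, I would apply the cocycle property \eqref{eq:cocyle_identity} to the quadruple $\cL_1 = \cL(0)$, $\cL_2 = \cL(1)$, $\cL_3 = \cM_1$, $\cL_4 = \cM_2$, obtaining
\[
\iD(\cL(0), \cL(1), \cM_1) - \iD(\cL(0), \cL(1), \cM_2) + \iD(\cL(0), \cM_1, \cM_2) - \iD(\cL(1), \cM_1, \cM_2) = 0,
\]
which rearranges directly to \eqref{eq:Hor_ZWZ1}.

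The main obstacle is justifying the reference-plane independence of the Maslov index of a loop, a standard but nontrivial property of the Maslov index. I would cite this from the literature (for example \cite{RobSal_t93} or \cite{Fur_jgp04}) rather than reproving it from the definition \eqref{eq:MaslovIndex_def}; once it is in hand, the rest is a clean combination of \Cref{lem:minimal_path} and the axioms of \Cref{def:Dui_index}.
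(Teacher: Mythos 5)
Your proposal is correct and uses essentially the same ingredients as the paper's proof: \Cref{lem:minimal_path} to produce a path computing the Duistermaat index, reference-plane independence of the Maslov index of a loop together with concatenation additivity for \eqref{eq:Hor_ZWZ0}, and the cocycle identity for \eqref{eq:Hor_ZWZ1}. The only (cosmetic) difference is in how the loop is closed: you concatenate $\cL$ with the \emph{reversed} minimal path from $\cL(0)$ to $\cL(1)$, which additionally invokes anti-symmetry of the Maslov index under path reversal (true for the convention \eqref{eq:MaslovIndex_def}, where reversal flips the crossing form and exchanges the endpoint conventions), whereas the paper runs the minimal non-decreasing path from $\cL(1)$ back to $\cL(0)$ and then converts $\iD(\cL(1),\cL(0),\cM_j)$ into $\iD(\cL(0),\cL(1),\cM_j)$ via \eqref{eq:swap12}, avoiding the reversal property altogether. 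One caveat: your cocycle display rearranges to $\iD\big(\cL(1),\cM_1,\cM_2\big)-\iD\big(\cL(0),\cM_1,\cM_2\big)$, which is the \emph{negative} of \eqref{eq:Hor_ZWZ1} as printed; a quick check with $n=1$, $\cL(t)=\cG_{2t-1}$, $\cM_1=\Hor$, $\cM_2=\Ver$ (where the left-hand side is $1$) shows that the printed right-hand side of \eqref{eq:Hor_ZWZ1} has its two terms in the wrong order, and since the paper's own proof makes the identical claim, this is a sign typo in the stated formula rather than a gap in your argument --- but you should not assert that your (correct) cocycle identity ``rearranges directly'' to the formula as written.
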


\begin{proof}
  Use Lemma~\ref{lem:minimal_path} to choose a path
  $\widetilde{\cL} \colon [1,2] \to \Lambda(n)$ from $\cL(1)$ to $\cL(0)$
  such that
  \begin{equation}
    \label{eq:minimal_pathM}
    \Mas_{[1,2]}\!\big(\widetilde{\cL}(t), \cM_j\big)
    = \iD\!\big(\cL(1), \cL(0), \cM_j\big),
    \qquad
    j=1,2.
  \end{equation}
  The concatenation of $\cL$ and $\widetilde{\cL}$ is a loop, therefore its Maslov index
  does not depend on the reference plane $\cM_j$.  Using the
  additivity of the Maslov index under concatenation, we get
  \begin{equation}
    \label{eq:concatMaslov}
    \Mas_{[0,1]}\!\big(\cL(t), \cM_1\big)
    + \iD\!\big(\cL(1), \cL(0), \cM_1\big)
    = \Mas_{[0,1]}\!\big(\cL(t), \cM_2\big)
    + \iD\!\big(\cL(1), \cL(0), \cM_2\big).
  \end{equation}
  Equation~\eqref{eq:Hor_ZWZ0} now follows from
  \eqref{eq:swap12}, and equation \eqref{eq:Hor_ZWZ1} follows from the
  cocycle property \eqref{eq:cocyle_identity}.
\end{proof}

We are now ready to proved the relationship \eqref{eq:MasDuist} between the Duistermaat and Maslov indices that was claimed in the introduction.

\begin{proof}[Proof of Theorem~\ref{thm:extremal_Dui}]
  For any non-decreasing path $\cL(t)$ from $\cL_0$ to $\cL_1$, we use
  \eqref{eq:Hor_ZWZ0} with $\cM_1=\cM$, $\cM_2 = \cL(1)$ and note that
  $\iD\!\big(\cL(0),\cL(1),\cL(1)\big)=0$ by \eqref{eq:spec_values1}.  We obtain
  \begin{equation}
    \label{eq:nondec_path_inequality}
    \iD\!\big(\cL(0),\cL(1),\cM\big)
    = \Mas_{[0,1]}\!\big(\cL(t), \cM\big) - \Mas_{[0,1]}\!\big(\cL(t), \cL(1)\big)
    \leq \Mas_{[0,1]}\!\big(\cL(t), \cM\big),
  \end{equation}
  because the path $\cL(\cdot)$ is non-decreasing.
  Lemma~\ref{lem:minimal_path} shows that the upper bound is
  achievable.  Finally, once the path $\cL(t)$ achieves equality for
  some $\cM$, i.e.,
  \begin{equation}
    \label{eq:nondec_equality}
    \iD\!\big(\cL(0),\cL(1),\cM\big) = \Mas_{[0,1]}\!\big(\cL(t), \cM\big),
  \end{equation}
  the same equality holds for any other $\cM$ by \eqref{eq:Hor_ZWZ0}.
\end{proof}

\section{The Morse index of a difference of Hermitian matrices}
\label{sec:applications}

We are now ready to prove equation \eqref{eq:crazy_formula} and its variants. For a Hermitian matrix $X$, we let $X^+$ denote the Moore--Penrose pseudoinverse and let $P_X$ denote the orthogonal projector
  onto $\Ran X$.

\begin{proposition}
  \label{prop:kerAkerB}
   Let $A$ and $B$ be $n \times n$ Hermitian matrices.
  \begin{enumerate}
  \item If 
    $\ker A \subseteq \ker B$, then
    \begin{equation}
      \label{eq:crazy_formula_kerB_bigger}
      n_-\big(A-B\big) = n_-(A) - n_-(B)
      + n_-\big(B^+ - P_B A^+ P_B\big).
    \end{equation}
  \item If $\ker B \subseteq \ker A$, then
    \begin{equation}
      \label{eq:crazy_formula_kerA_bigger}
      n_-\big(A-B\big) = n_-(A) - n_-(B)
      + n_-\big(P_A B^+ P_A - A^+\big) + n_0(A) - n_0(B).
    \end{equation}
  \end{enumerate}
\end{proposition}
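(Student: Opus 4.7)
The plan is to apply identity \eqref{eq:crazy_formula_gen} to reduce both parts to computing $\iD(\Hor, \cG_B, \cG_A)$ under the given kernel hypothesis, and then to evaluate this Duistermaat index using the self-adjoint linear relation machinery from \Cref{cor:SALR}. The first step, common to both parts, is to apply the inversion property \eqref{eq:inversion} together with $\Hor^{-1} = \Ver$ to rewrite
\begin{equation*}
\iD(\Hor, \cG_B, \cG_A) = \iD(\cG_A^{-1}, \cG_B^{-1}, \Ver).
\end{equation*}
As self-adjoint linear relations in the form \eqref{eq:gen_Lagr}, $\cG_A^{-1}$ and $\cG_B^{-1}$ have respective domains $\Ran A, \Ran B$, multivalued parts $\ker A, \ker B$, and operator parts $A^+, B^+$.

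For part (1), I will split the symplectic space using $\C^n = \Ran A \oplus \ker A$. The hypothesis $\ker A \subseteq \ker B$ gives the decomposition $\ker B = (\ker B \cap \Ran A) \oplus \ker A$, which causes all three Lagrangians to split compatibly with $\C^n \oplus \C^n = (\Ran A \oplus \Ran A) \oplus (\ker A \oplus \ker A)$. The $\ker A$-factor contributes a trivial $\iD(\Ver,\Ver,\Ver) = 0$ by \eqref{eq:spec_values1}, so symplectic additivity \eqref{eq:additivity} leaves an index of the form $\iD_{\Ran A}(\cG_{A^+}, \cM_1, \Ver)$, where $\cM_1 \subset \Ran A \oplus \Ran A$ is the relation with domain $\Ran B$ and operator part $B^+$. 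Applying \eqref{eq:berndt-luger} to this reduced index produces the Morse index of $B^+ - P_B A^+ P_B$ on $\Ran B$, which agrees with $n_-(B^+ - P_B A^+ P_B)$ on $\C^n$ since both operators vanish on $\ker B$. Substituting into \eqref{eq:crazy_formula_gen} then yields \eqref{eq:crazy_formula_kerB_bigger}.

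For part (2), I will run the parallel argument with the symmetric splitting from $\C^n = \Ran B \oplus \ker B$. Now the graph factor coming out of $\cG_B^{-1}$ sits in the middle slot of the index, while $\cG_A^{-1}$ contributes a non-graph relation $\cM_2$ of domain $\Ran A$ and multivalued part $\ker A \cap \Ran B$ to the first slot. Consequently I will invoke \eqref{eq:luger-berndt} rather than \eqref{eq:berndt-luger}, and the extra $\dim \cL \cap \Ver$ term it carries produces exactly $\dim(\ker A \cap \Ran B) = n_0(A) - n_0(B)$ via the decomposition $\ker A = (\ker A \cap \Ran B) \oplus \ker B$; this accounts for the additional summand in \eqref{eq:crazy_formula_kerA_bigger}.

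The only genuinely delicate point is bookkeeping: verifying that the operator parts of $\cG_A^{-1}$ and $\cG_B^{-1}$, the compressions appearing in \eqref{eq:berndt-luger}--\eqref{eq:luger-berndt}, and the projectors $P_A, P_B$ in the statement line up correctly, and confirming that the extensions by zero on the appropriate kernels preserve Morse indices. Once the splittings are chosen, no convergence, perturbation, or homotopy arguments are required—the conclusion follows by direct substitution into the identities already established.
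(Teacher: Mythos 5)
Your proof is correct, and its core is the same as the paper's: both rest on \eqref{eq:crazy_formula_gen} (i.e.\ \Cref{thm:general_formula}), the inversion identity \eqref{eq:inversion} with $\Hor^{-1}=\Ver$, the description of $\cG_A^{-1}$, $\cG_B^{-1}$ as relations with operator parts $A^+$, $B^+$, and then \eqref{eq:berndt-luger} for part (1) and \eqref{eq:luger-berndt} for part (2), the latter's $\dim\cL\cap\Ver$ term producing $n_0(A)-n_0(B)$. The one place you genuinely diverge is the treatment of the common kernel: the paper disposes of it \emph{before} invoking the index, by observing that $A$, $B$, $A-B$ and $B^+-P_BA^+P_B$ all vanish on $\ker A$ (resp.\ on $\ker B$ in part (2)), so one may restrict to $(\ker A)^\perp$ and assume $A$ invertible, after which $\cG_A^{-1}=\cG_{A^+}$ is an honest graph and \eqref{eq:berndt-luger} applies directly; you instead keep $A$, $B$ general and split the symplectic space as $(\Ran A\oplus\Ran A)\oplus(\ker A\oplus\ker A)$, checking that the inclusion $\ker A\subseteq\ker B$ makes all three relations decompose, discarding the $\iD(\Ver,\Ver,\Ver)=0$ factor via \eqref{eq:spec_values1}, and applying \eqref{eq:berndt-luger}--\eqref{eq:luger-berndt} inside the $\Ran A$ (resp.\ $\Ran B$) block. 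Both reductions are sound: the paper's restriction argument is more elementary (a one-line Morse-index observation, no appeal to \eqref{eq:additivity}), while yours stays entirely within the Duistermaat calculus and uses symplectic additivity to do the bookkeeping, at the modest cost of verifying the compatibility of the three Lagrangians with the splitting — which is exactly the point your hypothesis $\ker A\subseteq\ker B$ (resp.\ $\ker B\subseteq\ker A$, giving $\ker A=(\ker A\cap\Ran B)\oplus\ker B$) guarantees.
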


We note that equation~\eqref{eq:crazy_formula} follows from either one
of \eqref{eq:crazy_formula_kerB_bigger} and
\eqref{eq:crazy_formula_kerA_bigger}: for an invertible $X$, $P_X$ is the
identity and $X^+=X^{-1}$.

\begin{proof}
  In the case when $\ker A \subseteq \ker B$, the Hermitian matrices
  $A$, $B$, $A-B$ and $B^+ - P_B A^+ P_B$ are all identically zero on
  $\ker A$.  This means we can restrict $A$ and $B$ (and thus $A-B$ and
  $B^+ - P_B A^+ P_B$) to the subspace $(\ker A)^\perp$, on which $A$
  is invertible, without changing their Morse indices. It thus suffices to prove \eqref{eq:crazy_formula_kerB_bigger}
when $A$ is invertible.

  Using
  Theorem~\ref{thm:general_formula}, we just need to show that
  $\iD(\cG_0, \cG_B, \cG_A) = n_-\big(B^+ - P_B A^+ P_B\big)$.  We use
  \eqref{eq:inversion} to get
  \begin{equation}
    \label{eq:inversion_applied}
    \iD(\cG_0, \cG_B, \cG_A) = \iD(\cG_A^{-1}, \cG_B^{-1}, \Ver),
  \end{equation}
  and observe that for invertible $A$ we have
  \begin{equation}
    \label{eq:ABinverse}
    \cG_A^{-1} = \cG_{A^{-1}} = \cG_{A^+}, \qquad
    \cG_B^{-1} = \big\{(x, B^+x+y) \colon x \in \Ran B,\ y \in \ker B \big\},
  \end{equation}
  cf.~\eqref{eq:gen_Lagr}. The desired conclusion now follows from equation~\eqref{eq:berndt-luger}.

The proof of \eqref{eq:crazy_formula_kerA_bigger} proceeds similarly, by restricting all matrices in the equation to $(\ker B)^\perp$  and observing that
  \begin{equation}
    \label{eq:n0unchanged}
    n_0(A) - n_0(B) = n_0\left(A|_{(\ker B)^\perp}\right) -
    n_0\left(B|_{(\ker B)^\perp}\right)
    = n_0\left(A|_{(\ker B)^\perp}\right).
  \end{equation}
  To establish \eqref{eq:crazy_formula_kerA_bigger} under the
  assumption that $B$ is invertible, we use
  Theorem~\ref{thm:general_formula}, equation
  \eqref{eq:inversion_applied}, an analogue of \eqref{eq:ABinverse}
  with $A$ and $B$ swapped and, finally, \eqref{eq:luger-berndt}.
\end{proof}

\begin{corollary}
  \label{cor:crazy_formula_plus}
  For any invertible $n\times n$ Hermitian matrices $A$ and $B$,
  \begin{equation}
    \label{eq:crazy_formula_plus}
    n_-\big(A+B\big) + n_0\big(A+B\big)+ n_-\big(A^{-1}+B^{-1}\big)
    = n_-(A) + n_-(B).
  \end{equation}
\end{corollary}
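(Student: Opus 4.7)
The plan is to derive \eqref{eq:crazy_formula_plus} directly from equation~\eqref{eq:crazy_formula} of \Cref{prop:kerAkerB} by substituting $-B$ in place of $B$. Since $-B$ is invertible whenever $B$ is, this substitution is legitimate and yields
\[
n_-(A+B) - n_-\!\big(-(A^{-1}+B^{-1})\big) = n_-(A) - n_-(-B),
\]
using $A - (-B) = A + B$ and $(-B)^{-1} = -B^{-1}$.

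Next I would rewrite each $n_-(-X)$ using the elementary identity $n_-(-X) = n_+(X) = n - n_-(X) - n_0(X)$ for Hermitian $X$. Applied to $X = B$ (where $n_0(B) = 0$ since $B$ is invertible) and to $X = A^{-1} + B^{-1}$, the displayed equation reduces, after the two $n$'s cancel, to
\[
n_-(A+B) + n_-(A^{-1}+B^{-1}) + n_0(A^{-1}+B^{-1}) = n_-(A) + n_-(B).
\]

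To match the statement exactly, it remains to replace $n_0(A^{-1}+B^{-1})$ with $n_0(A+B)$. This rests on the factorization $A^{-1}+B^{-1} = A^{-1}(A+B)B^{-1}$, which, together with the invertibility of $A$ and $B$, implies that $x \mapsto B^{-1}x$ is a bijection between $\ker(A+B)$ and $\ker(A^{-1}+B^{-1})$; in particular these kernels have the same dimension. No step of this plan poses a serious obstacle — the argument is a straightforward consequence of the identity already established in \Cref{prop:kerAkerB} together with elementary bookkeeping.
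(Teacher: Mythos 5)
Your proposal is correct and takes essentially the same route as the paper: the paper substitutes $-A$ (rather than your $-B$) into \eqref{eq:crazy_formula} and then uses $n_+(X)=n-n_-(X)-n_0(X)$, so the nullity term appears directly as $n_0(A+B)$ and no extra identification is needed. Your additional step is fine --- the factorization $A^{-1}+B^{-1}=A^{-1}(A+B)B^{-1}$ with $A,B$ invertible does give $n_0(A^{-1}+B^{-1})=n_0(A+B)$ --- though strictly speaking $B^{-1}$ maps $\ker(A^{-1}+B^{-1})$ onto $\ker(A+B)$ (and $x\mapsto Bx$ goes the other way), not as you stated.
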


\begin{proof}
  Equation~\eqref{eq:crazy_formula} with $-A$ in place of $A$ becomes
  \begin{equation}
    \label{eq:crazy_mod}
    n_+\big(A+B\big) - n_-\big(A^{-1}+B^{-1}\big)
    = n_+(A) - n_-(B).
  \end{equation}
  We then use $n_+(X) = n-n_-(X)-n_0(X)$ to obtain
  \eqref{eq:crazy_formula_plus}; $n_0(A)$ disappears because $A$ is invertible.
\end{proof}

Similar computations lead to a much simpler proof of
\cite[Prop.~6.1]{BerCoxLatSuk_prep23}, which we give below.

\begin{proposition}[Prop.~6.1 of \cite{BerCoxLatSuk_prep23}]
  Assume $\cL_1$, $\cL_2$ and $\cL_3$ are Lagrangian planes such that
  $\cL_3$ is transversal to $\cL_1$, $\cL_2$ and $\Ver$.  Then
  \begin{equation}
    \label{eq:index_Delta}
    \iD(\cL_1,\cL_2,\cL_3)
    = n_-\left( (\cL_1 - \cL_3)^{-1} - (\cL_2 - \cL_3)^{-1}\right);
  \end{equation}
  the relation in the right-hand side 
  is the graph of a matrix and $n_-$ denotes its Morse index.
\end{proposition}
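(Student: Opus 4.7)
The plan is to reduce this statement to the already-established identity $\iD(\cG_A, \cG_B, \Ver) = n_-(B-A)$ appearing as \eqref{eq:BA_AminusB} in the proof of \Cref{thm:general_formula}. Since $\cL_3$ is transversal to $\Ver$, I would first write $\cL_3 = \cG_C$ for some Hermitian matrix $C$ and apply the subtraction identity \eqref{eq:subtraction} with $A = C$ to obtain
\begin{equation*}
\iD(\cL_1, \cL_2, \cL_3) = \iD\!\big(\cL_1 - \cL_3,\; \cL_2 - \cL_3,\; \Hor\big).
\end{equation*}
The hypothesis $\cL_j \cap \cL_3 = \{0\}$ then translates, under the corresponding symplectic translation, to $(\cL_j - \cL_3) \cap \Hor = \{0\}$ for $j = 1, 2$.

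Next, I would invoke the inversion property \eqref{eq:inversion}, together with $\Hor^{-1} = \Ver$, to recast the expression as
\begin{equation*}
\iD\!\big(\Ver,\; (\cL_2 - \cL_3)^{-1},\; (\cL_1 - \cL_3)^{-1}\big).
\end{equation*}
Because inversion swaps transversality to $\Hor$ with transversality to $\Ver$, each relation $(\cL_j - \cL_3)^{-1}$ is the graph $\cG_{M_j}$ of some Hermitian matrix $M_j$, and in particular $\Ver \cap \cG_{M_j} = \{0\}$ for $j = 1, 2$.

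To finish, I would move $\Ver$ back to the third slot by applying the cyclic shift identity \eqref{eq:iDshift1} twice. The correction terms in this shift involve $\dim\Ver\cap\cG_{M_j}$, which vanish, and $\dim\cG_{M_1}\cap\cG_{M_2}$, which appears once with each sign and therefore cancels. The result is
\begin{equation*}
\iD\!\big(\Ver, \cG_{M_2}, \cG_{M_1}\big) = \iD\!\big(\cG_{M_2}, \cG_{M_1}, \Ver\big) = n_-(M_1 - M_2),
\end{equation*}
where the last equality is \eqref{eq:BA_AminusB}. Since $\cG_{M_1} - \cG_{M_2} = \cG_{M_1 - M_2}$ and the Morse index of a Lagrangian plane which happens to be the graph of a Hermitian matrix is just the Morse index of that matrix, the right-hand side equals $n_-\!\big((\cL_1 - \cL_3)^{-1} - (\cL_2 - \cL_3)^{-1}\big)$, which is the desired formula.

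The principal obstacle is the bookkeeping in the permutation step: one must ensure that the dimension corrections produced by \eqref{eq:iDshift1} either vanish (using transversality to $\Ver$) or cancel in pairs (as happens for the $\dim\cG_{M_1}\cap\cG_{M_2}$ term). Once this is handled, the remainder of the argument is a mechanical chain of substitutions into identities already established in \Cref{thm:properties} and \Cref{cor:SALR}.
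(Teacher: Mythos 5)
Your proposal is correct and uses essentially the same ingredients as the paper's proof --- the subtraction identity \eqref{eq:subtraction}, the inversion identity \eqref{eq:inversion}, the cyclic shift \eqref{eq:iDshift1} with vanishing or cancelling intersection terms, and the normalization step \eqref{eq:BA_AminusB} --- merely applied in a different order (the paper permutes $\cL_3$ to the first slot at the start, using $\cL_j\cap\cL_3=\{0\}$, whereas you permute $\Ver$ to the last slot at the end, using transversality to $\Ver$). The transversality bookkeeping you flag is handled correctly, so no changes are needed.
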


\begin{proof}
  We have the following chain of equalities:
  \begin{align}
    \iD(\cL_1,\cL_2,\cL_3)
    &= \iD(\cL_3,\cL_1,\cL_2) \\
      &= \iD(\Hor,\cL_1-\cL_3,\cL_2-\cL_3) \\
    &= \iD\left( (\cL_2-\cL_3)^{-1}, (\cL_1-\cL_3)^{-1}, \Ver\right) \\
    &=  n_-\left( (\cL_1 - \cL_3)^{-1} - (\cL_2 - \cL_3)^{-1}\right).
  \end{align}
  In the first step we applied \eqref{eq:iDshift1} and used
  $\cL_1\cap\cL_3 = \cL_2\cap\cL_3 = \{0\}$.  In the second step we
  used \eqref{eq:subtraction}, which is allowed because $\cL_3$ is transversal
  to $\Ver$ and therefore is the graph of a matrix.  In the third step
  we used \eqref{eq:inversion}.  We remark that because $\cL_3$ is
  transversal to $\cL_1$, $\cL_1-\cL_3$ is transversal to $\Hor$ and
  thus $(\cL_1 - \cL_3)^{-1}$ is transversal to $\Ver$.  Similarly,
  $(\cL_2 - \cL_3)^{-1}$ is also a graph.  Therefore, in the last
  step, we used \eqref{eq:subtraction} to subtract $(\cL_2 -
  \cL_3)^{-1}$ and used the normalization condition \eqref{eq:normalization}.
\end{proof}

\begin{remark}
The identity \eqref{eq:crazy_formula} can also be obtained using Haynsworth's 
inertia additivity formula.
For a block Hermitian matrix $H = \left(\begin{smallmatrix}
    A & B \\ B^* & D
  \end{smallmatrix}\right)$
with invertible $A$, the Haynsworth formula \cite{Hay_laa68} states
\begin{equation}
  \label{eq:Haynsworth}
  n_-(H) = n_-(A) + n_-\left(D - B^* A^{-1} B\right).
\end{equation}
A similar identity holds if $D$ is invertible. Applying this formula in two possible ways to the matrix
$\left(\begin{smallmatrix}
    A & I \\
    I & B^{-1}
  \end{smallmatrix}\right)$
yields
\begin{align}
  \label{eq:applying}
  n_-(A) + n_-\left(B^{-1} - A^{-1}\right)
  &= n_-\left(B^{-1}\right) + n_-\left(A - I
  \left(B^{-1}\right)^{-1}\right) \\
  &= n_-(B) + n_-(A-B),
\end{align}
which establishes~\eqref{eq:crazy_formula}.  Proposition~\ref{prop:kerAkerB} can
be proved analogously; this requires a suitable extension
of the Haynsworth formula such as \cite[Thm.~6.1]{Mad_laa88} or
\cite[Thm.~A.1]{BerCanCoxMar_paa22}.
\hfill$\Diamond$\end{remark}

\bibliography{bk_bibl,additional}

\bibliographystyle{siam}

\end{document}